\declaretheoremstyle[
 headfont=\normalfont\bfseries,
 headindent= 0pt,
 bodyfont=\em,
 spaceabove=8pt,
 spacebelow=8pt
]{thm}
\declaretheoremstyle[
 headfont=\normalfont\em,
 headindent= 0pt,
 spaceabove=8pt,
 spacebelow=8pt
]{remark}
\declaretheoremstyle[
 headfont=\normalfont\bfseries,
 headindent= 0pt,
 spaceabove=8pt,
 spacebelow=8pt
]{example}
\declaretheoremstyle[
 headfont=\normalfont\bfseries,
 headindent= 0pt,
 spaceabove=8pt,
 spacebelow=8pt
]{definition}
\declaretheorem[name=Theorem,style=thm,numberwithin=section,
]{thm}
\declaretheorem[name=Proposition,style=thm,sibling=thm]{prop}
\declaretheorem[name=Lemma,style=thm,sibling=thm]{lem}
\declaretheorem[name=Corollary,style=thm,sibling=thm]{cor}
\declaretheorem[name=Example,style=example,sibling=thm]{example}
\declaretheorem[name=Remark,style=remark]{rem}
\crefname{thm}{Theorem}{Theorems}
\crefname{prop}{Proposition}{Propositions}
\crefname{lem}{Lemma}{Lemmas}
\crefname{cor}{Corollary}{Corollaries}
\crefname{example}{Example}{Examples}
\crefname{defn}{Definition}{Definitions}
\crefname{rem}{Remark}{Remarks}
\crefname{section}{Section}{Sections}
\crefname{equation}{}{}
\numberwithin{equation}{section}
\newcommand{\RR}{\mathbb{R}}
\newcommand{\CC}{\mathbb{C}}
\newcommand{\aba}{\bar{\alpha}}
\newcommand{\bba}{\bar{\beta}}
\newcommand{\gba}{\bar{\gamma}}
\newcommand{\nba}{\bar{\nu}}
\newcommand{\sba}{\bar{\sigma}}
\newcommand{\wba}{\bar{w}}
\newcommand{\Wba}{\overline{W}}
\newcommand{\zba}{\bar{z}}
\newcommand{\Zba}{\overline{Z}}
\newcommand{\mean}{H}
\newcommand{\jba}{\bar{j}}
\newcommand{\kba}{\bar{k}}
\newcommand{\lba}{\bar{l}}
\newcommand{\vr}{\varrho}
\DeclareMathOperator{\vol}{Vol}
\renewcommand{\Re}{\operatorname{Re}}
\renewcommand{\Im}{\operatorname{Im}}
\newcommand{\II}{I\!I}
\begin{document}
\title[On the Chern--Moser--Weyl tensor of real hypersurfaces]{On the Chern--Moser--Weyl tensor of real hypersurfaces}
\author{Michael Reiter}
\address{Fakultät für Mathematik, Universität Wien, Oskar-Morgenstern-Platz 1, 1090 Wien, Austria}
\email{m.reiter@univie.ac.at}
\author{Duong Ngoc Son}
\address{Fakultät für Mathematik, Universität Wien, Oskar-Morgenstern-Platz 1, 1090 Wien, Austria}
\email{son.duong@univie.ac.at}
\begin{abstract}
	We derive an explicit formula for the well-known Chern--Moser--Weyl tensor for nondegenerate real hypersurfaces in complex space in terms of their defining functions. The formula is considerably simplified when applying to ``pluriharmonic perturbations'' of the sphere or to a Fefferman approximate solution to the complex Monge-Amp\`ere equation. As an application, we show that the CR invariant one-form $X_{\alpha}$ constructed recently by Case and Gover is nontrivial on each real ellipsoid of revolution in $\mathbb{C}^3$, unless it is equivalent to the sphere. This resolves affirmatively a question posed by these two authors in 2017 regarding the\linebreak (non-) local CR invariance of the $\mathcal{I}'$-pseudohermitian invariant in dimension five and provides a counterexample to a recent conjecture by Hirachi.
\end{abstract}

\date{April 2, 2020}
\subjclass[2000]{32V20, 32V30}
\thanks{The first author was supported by the Austrian Science Fund FWF-project P28873-N35. The second author was supported by the Austrian Science Fund FWF-project M 2472-N35.}
\maketitle

\section{Introduction}\label{sec:intro}
The Chern--Moser--Weyl tensor $S_{\alpha\bba\gamma\sba}$, introduced in \cite{chern1974real}, is one of the most important biholomorphic invariants of nondegenerate real hypersurfaces in $\mathbb{C}^{n+1}$, $n\geq 2$. When $n=1$, it vanishes identically by default and its role is played by the Cartan invariant. A fundamental property of it is that $S_{\alpha\bba\gamma\sba}\equiv 0$ characterizes CR spherical hypersurfaces. Theses are hypersurfaces which are CR equivalent to the sphere or a real hyperquadric, see \cite{chern1974real}. Moreover, $S_{\alpha\bba\gamma\sba}$ {plays an important role in recent studies of higher CR invariants and ``secondary'' invariants, similar to the role of the Weyl tensor in conformal geometry;} see, e.g., \cite{case2013paneitz,hirachi2014q,hirachi2017variation,case2017p} and the references therein. There exist explicit formulas for $S_{\alpha\bba\gamma\sba}$ in the literature, see \cite{chern1974real,webster2000holomorphic, webster2002remark,foo2018explicit}. However, the formulas given in the aforementioned papers are difficult to compute in certain examples. For instance, although $S_{\alpha\bba\gamma\sba}$ is given by appropriate coefficients in a normal form \cite{chern1974real}, the normalization process is often too complicated; even for a hypersurface which is already given in normal form at a centered point, it is not practical to renormalize the hypersurface at near by points to compute the tensor. Due to this complexity, it is hard to apply them in certain situations, e.g. when locating the CR umbilics or studying the CR invariance of the $\mathcal{I}'$-curvature; see, e.g., \cite{webster2002remark,webster2000holomorphic,case2017p}.

This motivates the first goal of this paper. We provide an explicit formula for the Chern--Moser--Weyl tensor of nondegenerate real hypersurfaces in terms of arbitrary defining functions, which has a rather concise representation and allows for direct applications, as we demonstrate in this paper. In order to describe the formula, we need to introduce some notation. Let $M \subset \mathbb{C}^{n+1}$ be a real hypersurface and $\vr$ a (smooth) defining function for $M$, i.e., $M = \{\vr =0\}$ and $d\vr \ne 0$ on $M$. Let $(z,w) = (z_1, \dots , z_n,z_{n+1})$ for coordinates on $\mathbb{C}^{n+1}$, $\theta = \iota^{\ast}(i\bar{\partial}\vr)$ the pseudohermitian structure (in the sense of \cite{webster1978pseudo}) induced by $\vr$, for $\iota: M \rightarrow \CC^{n+1}$ is the inclusion, and $\nabla$ the associated Tanaka--Webster connection introduced in \cite{tanaka1975differential} and \cite{webster1978pseudo} (see \cite{dragomir--tomassini} for more details). Since $d\vr \ne 0$ on $M$, for local considerations we may assume, without loss of generality, that $\vr_{w}: = \partial \vr/\partial w \ne 0$. Under this condition, the vector fields of $(1,0)$-type $Z_{\alpha}: = \partial_{\alpha} - (\vr_{\alpha}/\vr_w)\, \partial_w$, $\alpha = 1,2,\dots, n$, form a basis of $T^{1,0}M$. In this paper, tensorial quantities will be expressed in this frame.

A dual coframe $\{\theta^{\alpha} \colon \alpha = 1,2,\dots , n\}$ to $\{Z_{\alpha}\}$ is given by 
\begin{equation} 
\theta^{\alpha} = dz^{\alpha} - i\xi^{\alpha} \theta,
\end{equation} 
where the $\xi^{k}$'s are the components of the $(1,0)$-complex vector field $\xi$ defined by
\begin{equation} 
	\xi \ \rfloor \ i\partial\bar{\partial} \vr = ir \bar{\partial} \vr ,
	\quad
	\partial\vr (\xi) = 1.
\end{equation} 
This coframe is admissible in the sense that $d\theta = i h_{\alpha\bba} \theta^{\alpha} \wedge \theta^{\bba}$ for some hermitian matrix $h_{\alpha\bba}$, which is called the \textit{Levi matrix}.

Various expressions in this paper can be written concisely by using the following second order differential operator (introduced earlier in \cite{li--luk}):
\begin{equation}
D_{\alpha\bba}^{\varrho}
:=
\partial_{\alpha}\partial_{\bba} - \frac{\varrho_{\alpha}}{\varrho_w}\partial_{w}\partial_{\bba}
- \frac{\varrho_{\bba}}{\varrho_{\bar w}}\partial_{\bar w}\partial_{\alpha} + \frac{\varrho_{\alpha}\varrho_{\bba}}{|\varrho_w|^2}\partial_w \partial_{\bar w}.
\end{equation}
Notice that $h_{\alpha \bba}$ in the frame $Z_{\alpha}$ is given by
\begin{equation}\label{e:levimt}
	h_{\alpha\bba}: = -id\theta(Z_{\alpha},Z_{\bba}) = \vr_{Z\bar{Z}} (Z_{\alpha},Z_{\bba}) = D_{\alpha\bba}^{\vr}(\vr),
\end{equation} 
where $\vr_{Z\bar{Z}}$ is the hermitian Hessian of $\vr$. Similarly, we define
\begin{equation}\label{e:dab}
D_{\alpha\beta}^{\varrho}
:=
\partial_{\alpha}\partial_{\beta} - \frac{\varrho_{\alpha}}{\varrho_w}\partial_{w}\partial_{\beta}
- \frac{\varrho_{\beta}}{\varrho_w}\partial_{w}\partial_{\alpha} + \frac{\varrho_{\alpha}\varrho_{\beta}}{\varrho_w^2}\partial_w^2,
\end{equation}
which satisfies
\begin{equation} 
	D_{\alpha\beta}^{\vr}(\varphi) = \varphi_{ZZ}(Z_{\alpha}, Z_{\beta}),
\end{equation} 
where $\varphi_{ZZ}$ is the Hessian of $\varphi$ in holomorphic coordinates. Since $M$ is nondegenerate, $h_{\alpha\bba}$ is invertible with inverse $h^{\bba \alpha}$ and we shall use these matrices to lower and raise the Greek indices, which run over $1,\ldots, n$. Throughout this article the summation convention is used and 
performed with respect to repeated indices.

In our first result, the defining function $\vr$ has nondegenerate complex Hessian, i.e., $\vr_{j\kba}$ is invertible. In this case, the inverse of the Levi matrix is given by (see, e.g., \cite[(2.7)]{li--lin--son}):
\begin{equation}
\label{e:inverseLevi}
h^{\bba \alpha} = \varrho^{\bba \alpha} - \frac{\varrho^{\bba} \varrho^\alpha}{|\partial \varrho|^2}.
\end{equation} 
Here $|\partial\varrho|^2:= |\partial \vr|^2_\omega$ is the squared norm of $\partial\vr $ in the Kähler metric $\omega: = i\partial\bar{\partial}\vr$ and $\vr^k = \vr^{k\lba}\vr_{\lba}$. We also use $\rho_{k\lba}$ and its inverse $\rho^{k\lba}$ to lower and raise the lowercase Latin indices, which run over $1,\ldots, n+1$. 

\begin{thm}\label{thm:chern-moser}
	Suppose that $M$ is defined by $\vr = 0$ with $\vr_{j\kba} = \delta_{jk}$ and $\theta: = i\bar{\partial}\vr$. Put $h_{\alpha\beta} = D_{\alpha\beta}^{\vr}(\vr)$, $h_{\bba\sba} = \overline{h_{\beta\sigma}}$, and $h_{\bba}^{\mu} = h_{\bba\sba}h^{\mu\sba}$. Then the pseudohermitian curvature and the Chern--Moser--Weyl tensor are given by
	\begin{align}
	R_{\alpha\bba\gamma\sba}
	= &
	|\partial\vr |^{-2} \left(h_{\alpha\bba}h_{\gamma\sba} + h_{\alpha\sba}h_{\gamma\bba} -h_{\alpha\gamma}h_{\bba \sba} \right), \label{e:phc}\\
	S_{\alpha\bba\gamma\sba}
	= &
	-\frac{h_{\alpha\gamma}h_{\bba \sba}}{|\partial\vr |^{2}} + \frac{h_{\mu\alpha}h^{\mu}_{\bba}h_{\gamma\sba} + h_{\mu\gamma}h^{\mu}_{\bba}h_{\alpha\sba} + h_{\mu\alpha}h^{\mu}_{\sba}h_{\gamma\bba} + h_{\mu\gamma}h^{\mu}_{\sba}h_{\alpha\bba}}{(n+2)|\partial\vr |^{2}} \notag 	\label{e:chern-moser} \\
	& - \frac{h_{\mu\nu}h^{\nu\mu} \left( h_{\alpha\bba}h_{\gamma\sba} + h_{\alpha\sba}h_{\gamma\bba}\right)}{(n+1)(n+2)|\partial\vr |^{2}},
	\end{align}
	in the local frame $Z_{\alpha}:= \partial_\alpha - (\vr_\alpha/\vr_w)\, \partial_w.$
\end{thm}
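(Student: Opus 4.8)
The plan is to compute the Tanaka--Webster connection and curvature directly in the coframe $(\theta,\theta^\alpha)$ dual to $(T,Z_\alpha)$, to establish the pseudohermitian curvature formula \eqref{e:phc} first, and then to extract the Chern--Moser--Weyl tensor as its totally trace-free part. The data to start from are the admissibility relation $d\theta = i h_{\alpha\bba}\,\theta^\alpha\wedge\theta^\bba$ and the first structure equations $d\theta^\alpha = \theta^\beta\wedge\omega^\alpha_\beta + \theta\wedge\tau^\alpha$, in which the connection forms $\omega^\alpha_\beta$ and the torsion $\tau^\alpha$ are pinned down by the metric-compatibility condition $dh_{\alpha\bba} = \omega_{\alpha\bba}+\omega_{\bba\alpha}$, where $\omega_{\alpha\bba}:=h_{\gamma\bba}\,\omega^\gamma_\alpha$.

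To determine $\omega^\alpha_\beta$ I would differentiate $\theta^\alpha = dz^\alpha - i\xi^\alpha\theta$, re-expand the ambient holomorphic differentials in the coframe through $dz^\beta = \theta^\beta + i\xi^\beta\theta$ together with the expression for $dw$ forced by $\theta = i\bar\partial\vr$ and $\partial\vr(\xi)=1$, and then match the coefficients of $\theta^\beta\wedge\theta^\gamma$ and $\theta^\beta\wedge\theta^\gba$ (the $\theta\wedge\theta^\bba$ terms fixing the torsion). The decisive simplification is the hypothesis $\vr_{j\kba}=\delta_{jk}$: all mixed third derivatives $\vr_{jk\lba}$ and $\vr_{j\kba\lba}$ vanish, so only the holomorphic Hessian $\vr_{jk}$ and its holomorphic derivatives survive, and $\xi^\alpha$ and its derivatives become expressible in closed form through $\vr_j$, $\vr_{jk}$ and the constant inverse metric. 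I expect this step---solving for the connection and keeping track of the second-fundamental-form data carried by $h_{\alpha\beta}=D^\vr_{\alpha\beta}(\vr)$---to be the main obstacle; the operators $D^\vr_{\alpha\bba}$ and $D^\vr_{\alpha\beta}$ are precisely the device for organizing these covariant expressions and keeping the bookkeeping finite.

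With the connection in hand I would form the curvature $\Pi^\alpha{}_\beta = d\omega^\alpha_\beta - \omega^\gamma_\beta\wedge\omega^\alpha_\gamma$, read off $R^\alpha{}_{\beta\gamma\sba}$ as the coefficient of $\theta^\gamma\wedge\theta^\sba$, and lower the upper index with the Levi form. Simplifying the resulting contractions with the inverse Levi formula \eqref{e:inverseLevi}, which under $\vr_{j\kba}=\delta_{jk}$ reads $h^{\bba\alpha}=\delta_{\alpha\beta}-\vr^\bba\vr^\alpha/|\partial\vr|^2$, the Levi-quadratic contributions should assemble into $h_{\alpha\bba}h_{\gamma\sba}+h_{\alpha\sba}h_{\gamma\bba}$ and the second-fundamental-form contributions into $-h_{\alpha\gamma}h_{\bba\sba}$, each carrying the weight $|\partial\vr|^{-2}$; this is \eqref{e:phc}.

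Finally, for the asserted formula for $S_{\alpha\bba\gamma\sba}$ I would contract \eqref{e:phc} with $h^{\alpha\bba}$, using $h^{\alpha\bba}h_{\alpha\sba}=\delta^\bba_\sba$, to obtain the Ricci and scalar curvatures
\[
R_{\gamma\sba}=|\partial\vr|^{-2}\big((n+1)h_{\gamma\sba}-h_{\mu\gamma}h^\mu_\sba\big),\qquad R=|\partial\vr|^{-2}\big(n(n+1)-h_{\mu\nu}h^{\nu\mu}\big),
\]
and substitute these into the standard trace-free decomposition
\[
S_{\alpha\bba\gamma\sba}=R_{\alpha\bba\gamma\sba}-\tfrac{1}{n+2}\big(R_{\alpha\bba}h_{\gamma\sba}+R_{\gamma\sba}h_{\alpha\bba}+R_{\alpha\sba}h_{\gamma\bba}+R_{\gamma\bba}h_{\alpha\sba}\big)+\tfrac{R}{(n+1)(n+2)}\big(h_{\alpha\bba}h_{\gamma\sba}+h_{\alpha\sba}h_{\gamma\bba}\big).
\]
The coefficient of $h_{\alpha\bba}h_{\gamma\sba}+h_{\alpha\sba}h_{\gamma\bba}$ then collapses through $1-\tfrac{2(n+1)}{n+2}+\tfrac{n}{n+2}=0$, so the Levi-quadratic (spherical) part drops out entirely---as it must, being the curvature of the sphere and hence Weyl-flat---leaving exactly the three terms claimed for $S_{\alpha\bba\gamma\sba}$. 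This closing step is purely algebraic.
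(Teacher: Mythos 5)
Your closing step is correct and is essentially the paper's: contracting \eqref{e:phc} with $h^{\alpha\bba}$ gives $R_{\gamma\sba}=|\partial\vr|^{-2}\left((n+1)h_{\gamma\sba}-h_{\mu\gamma}h^{\mu}_{\sba}\right)$ and $R=|\partial\vr|^{-2}\left(n(n+1)-h_{\mu\nu}h^{\nu\mu}\right)$, and substituting into Webster's trace-free decomposition makes the coefficient of $h_{\alpha\bba}h_{\gamma\sba}+h_{\alpha\sba}h_{\gamma\bba}$ collapse via $1-\tfrac{2(n+1)}{n+2}+\tfrac{n}{n+2}=0$; the paper routes the identical arithmetic through the Li--Luk Ricci formula, solving for $D^{\vr}_{\alpha\bba}\log J(\vr)=h_{\alpha\epsilon}h^{\epsilon}_{\bba}/|\partial\vr|^{2}$ and feeding it into \cref{thm:chern--moser-general}. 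Where you genuinely diverge is the derivation of \eqref{e:phc}. You propose the classical direct route: solve Webster's structure equations for $\omega_{\beta}{}^{\gamma}$ in the coframe $\theta^{\alpha}=dz^{\alpha}-i\xi^{\alpha}\theta$, form the curvature two-form, and read off its $(1,1)$-part (which is legitimate, since the torsion corrections $i\theta_{\beta}\wedge\tau^{\gamma}-i\tau_{\beta}\wedge\theta^{\gamma}$ are of types $(0,2)$ and $(2,0)$ and do not contaminate the $\theta^{\gamma}\wedge\theta^{\sba}$ coefficient). The paper instead treats $\iota\colon(M,\theta)\to(\CC^{n+1},i\partial\bar{\partial}\vr)$ as a semi-isometric immersion; under $\vr_{j\kba}=\delta_{jk}$ the ambient metric is flat and all third derivatives in the formula for $\II$ vanish, so $\II(Z_{\alpha},Z_{\beta})=-h_{\alpha\beta}\xi$, $|H|^{2}=|\partial\vr|^{-2}$, and the Gauß equation \eqref{e:gauss} yields \eqref{e:phc} in one line. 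That is what the Gauß-equation approach buys: the curvature reduces to a once-and-for-all computation of the second fundamental form (at most third derivatives of $\vr$), with no need to differentiate the connection coefficients \eqref{e:cf}. Your route would work, but you have deferred exactly this hardest step --- you acknowledge it as ``the main obstacle'' and assert that the terms ``should assemble'' into \eqref{e:phc} without exhibiting the computation --- so as written the derivation of \eqref{e:phc} is an announced target rather than a proof, and completing it would require a calculation comparable in length to the paper's Section 2.
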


Note that in, e.g., \cite{ebenfelt2004rigidity},
a formula for the Chern--Moser--Weyl tensor similar to \eqref{e:chern-moser} was established in terms of coefficients of the second fundamental form of a CR immersion into the sphere which were not explicit.
 
The above statements follow from a more general result, given in \cref{thm:chern--moser-general}. In the general case, the formula is {inevitably} complicated. However, in the case of \textit{plurihamonic perturbations of the sphere}, i.e., when $\vr = \|Z\|^2 + \Re(\psi(Z))$ for some holomorphic function $\psi$, \cref{e:chern-moser} only involves 2nd order derivatives of the defining function although the Chern--Moser--Weyl tensor contains 4th order derivatives in general. 

Another important situation where our formulas are simplified is that of \textit{Fefferman approximate solution to the complex Monge-Amp\`ere equation}, i.e., when $J(\vr) = 1 + O(\vr^{n+2})$. Here, $J(\vr)$ is the \textit{Levi--Feffermann determinant} defined by
\begin{equation} \label{e:lfdetdef}
J(\vr):= -\det \begin{pmatrix}
	\vr & \vr_{\bar k} \\
	\vr_{j} & \vr_{j\bar k}
	\end{pmatrix}.
\end{equation} 
In this case, the formula for $S_{\alpha\bba\gamma\sba}$ is also considerably simplified; see \cref{cor:chern--moser}.

To derive our results we use the Gauß equation for ``semi-isometric'' immersions of pseudohermitian manifolds into K\"ahler manifolds. More precisely, we consider $(M,\theta: = -i\partial\vr$) as a pseudohermitian submanifold of the K\"ahler manifold $\CC^{n+1}$ equipped with the metric $\omega: = i\partial\bar{\partial}\vr$. Then, $d\theta = \iota^{\ast}\omega$ and hence $\iota$ is semi-isometric in the sense of \cite{son2019semi}. By the Gauß equation, the pseudohermitian curvature of $\theta$ is obtained from the K\"ahler curvature of $\omega$ and the second fundamental form. Using this fact, our computations become rather simple, since the second fundamental form $\II$ only involves derivatives of $\vr$ of order at most three. 

The second purpose of this paper is to give an affirmative answer to a question posed recently by Case and Gover. In \cite{case2017p}, Case and Gover constructed a pseudohermitian invariant $\mathcal{I}'$ in dimension five ($n=2$), namely,
\begin{equation}\label{e:defIprime}
\mathcal{I}'
=
-\frac{1}{8}\Delta_b |S_{\alpha\bba\gamma\sba}|^2 + \frac14\left|S_{\alpha\bba\gamma\sba,}{}^{\sba}\right|^2 + \frac{1}{12}R |S_{\alpha\bba\gamma\sba}|^2,
\end{equation} 
where $R$ is the Webster's scalar curvature. The formula for $\mathcal{I}'$ was stated in an equivalent form in \cite{case2017p} as for the middle term the CR analogue of the Cotton tensor $V_{\alpha\bba\gamma}$ was used (see \cite{gover2005sublaplace,case2017p}). They proved that the total $\mathcal{I}'$-curvature is a secondary invariant, at least in the case $c_2(H^{1,0}) = 0$, in the sense that 
\begin{equation}
	\int_{M^5} \widetilde{\mathcal{I}}' \widetilde{\theta} \wedge (d\widetilde{\theta})^2
	=
	\int_{M^5} \mathcal{I}' \theta \wedge (d{\theta})^2,
\end{equation}
for arbitrary pseudo-Einstein structures $\theta$ and $\widetilde{\theta}$. Moreover, as a local pseudohermitian invariant, $\mathcal{I}'$ transforms as follows: If $\widetilde{\theta} = e^{\Upsilon} \theta$, then by \cite[(8.17)]{case2017p}
\begin{equation}\label{e:2:8.17}
e^{3\Upsilon} \widetilde{\mathcal{I}}' = \mathcal{I}' + 2\Re X^{\gamma} \nabla_{\gamma} \Upsilon,
\end{equation} 
where
\begin{equation}\label{e:Xalphadef}
X_{\alpha} = \frac{1}{2} S_{\alpha\bba\gamma\sba}S^{\bba\gamma\sba}{}_{\bar{\epsilon},}{}^{\bar{\epsilon}} + \frac14 \nabla_{\alpha}|S_{\epsilon\bba\gamma\sba}|^2.
\end{equation}
As discussed in \cite[Remark~8.11]{case2017p}, $\mathcal{I}'$ can be formally regarded as the ``prime analogue'' of the conformal invariant $ \left|\widetilde{\nabla}\widetilde{\mathrm{Rm}}\right|^2$ of Fefferman and Graham, where
$\widetilde{\nabla}$ and $\widetilde{\mathrm{Rm}}$ are the covariant derivative and Riemannian curvature tensor, respectively, of the ambient metric; see \cite[(9.3)]{fefferman2012ambient}.
A question posed by Case and Gover in \cite{case2017p} asks whether there exists a $5$-dimensional pseudo-Einstein manifold for which $X_{\alpha}$ is nonzero. By using \cref{thm:chern-moser}, we show that in fact $X_{\alpha}$ is a nontrivial CR invariant for generic real ellipsoidal hypersurfaces of revolution in $\mathbb{C}^3$, which appeared in \cite{webster2002remark}, and consequently, $\mathcal{I}'$ is not a local CR invariant.

\begin{thm}\label{thm:xalpha} The CR invariant one-form $X_{\alpha}$ is nontrivial on real ellipsoids of revolution $E(a)$ in $\CC^3$ defined by 
	\begin{equation}
	\varrho (z_1,z_2,w): = |z_1|^2 + |z_2|^2 + |w|^2 + \Re (aw^2) - 1 = 0, \quad a\in \RR,
	\end{equation}
unless $a=0$.
\end{thm}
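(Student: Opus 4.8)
The plan is to compute $X_{\alpha}$ explicitly for the ellipsoid $E(a)$ using Theorem~\ref{thm:chern-moser}, and to show that it cannot vanish identically unless $a=0$. The crucial structural advantage here is that $\varrho = \|Z\|^2 + \Re(aw^2) - 1$ is a \emph{pluriharmonic perturbation of the sphere}, since $\Re(aw^2)$ is the real part of a holomorphic function; consequently $\varrho_{j\kba} = \delta_{jk}$ holds identically, so Theorem~\ref{thm:chern-moser} applies directly with the hermitian Hessian being the identity matrix. This means the building blocks of the formula, the quantities $h_{\alpha\beta} = D^{\varrho}_{\alpha\beta}(\varrho)$, will only involve the holomorphic second derivatives of $\Re(aw^2)$ together with the frame coefficients $\varrho_{\alpha}/\varrho_w$, and no derivatives of order higher than two. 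I would begin by writing out the frame $Z_{\alpha} = \partial_{\alpha} - (\varrho_{\alpha}/\varrho_w)\partial_w$ for $\alpha = 1,2$ (here $n=2$), recording $\varrho_{\alpha} = \zba_{\alpha}$, $\varrho_w = \wba + a w$, and computing $h_{\alpha\beta}$ from \eqref{e:dab}.

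The next step is to assemble the Chern--Moser--Weyl tensor $S_{\alpha\bba\gamma\sba}$ from \eqref{e:chern-moser} using these $h_{\alpha\beta}$, and then to feed the result into the definition \eqref{e:Xalphadef} of $X_{\alpha}$, which requires one covariant derivative $\nabla_{\epsilon}$ of $S$ (via the Tanaka--Webster connection) together with the norm $|S_{\epsilon\bba\gamma\sba}|^2$ and the contraction $S_{\alpha\bba\gamma\sba}S^{\bba\gamma\sba}{}_{\bar\epsilon,}{}^{\bar\epsilon}$. Because the ellipsoid is a hypersurface of revolution, I expect a residual circle symmetry (rotation in one of the $z$-variables, or the $w \mapsto e^{i\phi}w$-type symmetry compatible with $aw^2$) that organizes the computation: I would exploit this symmetry to reduce the number of independent tensor components and to identify a convenient point or slice of $M$ at which to evaluate $X_{\alpha}$. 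The goal is to produce a closed-form expression for, say, $X_{1}$ or a suitable component, as an explicit function of $a$ and the coordinates on $M$.

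The final step is the nonvanishing argument. Rather than showing $X_{\alpha} \not\equiv 0$ everywhere, it suffices to exhibit a single point of $E(a)$ at which some component of $X_{\alpha}$ is nonzero whenever $a \neq 0$; by CR invariance (the form $X_{\alpha}$ is a genuine CR invariant, so its vanishing is a CR-invariant condition), this rules out CR-sphericity. A clean strategy is to expand the explicit formula for $X_{\alpha}$ in powers of $a$ near $a=0$ and to show that the leading nonzero term is, say, of order $a^{2}$ or $a^{3}$ with a nonvanishing coefficient at a well-chosen point; since $a=0$ gives the sphere (where $S \equiv 0$ and hence $X_{\alpha}\equiv 0$), any nonzero term in the Taylor expansion forces $X_{\alpha}\not\equiv 0$ for small $a\neq 0$, and one then checks that the relevant coefficient does not vanish for the remaining range of $a$.

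I expect the main obstacle to be the computation of the covariant derivative term $S_{\alpha\bba\gamma\sba,}{}^{\sba}$ in \eqref{e:Xalphadef}: this requires the Tanaka--Webster connection coefficients (Christoffel-type symbols) in the frame $Z_{\alpha}$, which in turn demand $d\theta^{\alpha}$ and the connection forms, so some care is needed to extract these from the defining function without re-normalizing at nearby points---precisely the difficulty the explicit formula of Theorem~\ref{thm:chern-moser} is designed to circumvent. Keeping the defining function fixed and differentiating the explicit expressions for $S$ and $h_{\alpha\beta}$ directly, rather than passing through a normal form, should make this tractable, and the revolution symmetry should keep the connection terms manageable.
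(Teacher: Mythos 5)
Your overall strategy---compute $S_{\alpha\bba\gamma\sba}$ on $E(a)$ from \cref{thm:chern-moser} and then evaluate \cref{e:Xalphadef} explicitly---is exactly the route the paper takes in \cref{sec:Xalpha}. However, two points in your plan are genuine gaps rather than routine bookkeeping. First, you propose to compute the divergence term $S_{\alpha\bba\gamma\sba,}{}^{\sba}$ by differentiating the explicit expression for $S$ directly in the scale $\theta=i\bar\partial\vr$. This is where the computation becomes unmanageable: the paper instead passes to the volume-normalized structure $\widetilde\theta = J(\vr)^{-1/(n+2)}\,i\bar\partial\vr$, which is pseudo-Einstein, and then uses the identity $S_{\alpha\bba\gamma\sba,}{}^{\sba}=-inV_{\alpha\bba\gamma}$ together with the pseudo-Einstein simplification of the CR Cotton tensor (\cref{lem:VTensor}) and the tracelessness of $S$ to reduce the entire divergence term to the contraction $S_{\rho}{}^{\alpha\bba\gamma}A_{\alpha\gamma,\bba}$, with $A_{\alpha\beta}$ supplied by \cref{cor:torsion}. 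Without this reduction you must carry the Schouten tensor derivative $P_{\alpha\bba,\gamma}$ and the $T_\alpha$ terms (or equivalently a fifth-order expression in $\vr$), and your proposal gives no indication of how to control that; this is the actual content of the proof, not the assembly of $S$ itself.

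Second, your nonvanishing argument is incomplete. Expanding in powers of $a$ near $a=0$ only establishes $X_\alpha\not\equiv 0$ for small nonzero $a$, and the closing remark that ``one then checks'' the coefficient for the remaining range defers precisely the point at issue. The paper's computation yields the closed form
\begin{equation*}
\widetilde{X}_{\alpha}
=\frac{1}{24}\frac{a^4\|z\|^6 \zba_{\alpha}}{|\partial\varrho|^{13}}\left(|\partial\varrho|^2 +9 a\|z\|^2\vr_{\wba}/\vr_w\right),
\end{equation*}
which is visibly nonzero for every $a\ne 0$ (evaluate at $w=0$, $z\ne 0$); note also that the leading order is $a^4$, not $a^2$ or $a^3$ as you guess. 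Finally, the aim is to show $X_\alpha\not\equiv 0$, not to ``rule out CR-sphericity''---nonsphericity already follows from $S\ne 0$ and is strictly weaker than what the theorem asserts.
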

In fact, we shall give an explicit formula for the CR invariant one-form $X_{\alpha}$ on $E(a)$ which is manifestly nontrivial.

As {briefly explained} in \cite[Remark~8.12]{case2017p}, a pseudo-Einstein CR manifold for which $X_{\alpha} \not\equiv 0$ provides a counterexample to a conjecture by Hirachi regarding the decomposition of the scalar secondary invariants on compact CR manifolds. Precisely, in \cite[p. 242]{hirachi2014q}, it is conjectured that a pseudohermitian scalar invariant for which the integral is a secondary invariant can be decomposed into the sum of a constant multiple of the $Q'$-curvature, a local CR invariant, and a divergence.
{As \cite{case2017p} does not contain full details, we sketch an argument suggested to the authors by the referee that disproves the Hirachi conjecture as follows. It can be shown that in the situation of \cref{thm:xalpha} the divergence $\Re \nabla^{\alpha} X_{\alpha}$ is not identically zero on $E(a)$, see Remark~1 at the end of \cref{sec:Xalpha}. Then there exists a smooth function $\Upsilon$ on ${E(a)}$ such that 
\[
	\Re \int_{E(a)}   X^{\alpha} \nabla_{\alpha} \Upsilon \, \theta \wedge (d\theta)^2 = -  \int_{E(a)} \Upsilon \Re(\nabla^{\alpha} X_{\alpha}) \, \theta \wedge (d\theta)^2 \ne 0.	
\]
Therefore, using \cref{e:2:8.17} above, we obtain
\[
	\frac{d}{dt}\biggl|_{t=0}\, \int_{E(a)} \mathcal{I}'_{e^{t\Upsilon} \theta} e^{3t\Upsilon}\, \theta \wedge (d\theta)^2  = 2\Re \int_{E(a)} X^{\alpha} \nabla_{\alpha} \Upsilon \, \theta \wedge (d\theta)^2 \ne 0.
\]
Thus, the total $\mathcal{I}'$ is not CR invariant and hence $\mathcal{I}'$ cannot be the sum of a local CR invariant and a pure divergence. 
}

We note that the one-form $X_{\alpha}$ vanishes identically on CR spherical manifolds. More generally, it vanishes identically on CR manifolds {for which there exists a pseudohermitian structure with} parallel Chern--Moser--Weyl tensor, i.e., when $\nabla S_{\alpha\bba\gamma\sba} = 0$. Thus, it is still an interesting open question whether Hirachi's conjecture is true on CR spherical manifolds. It is worth pointing out that there exist examples showing that  the CR sphericity of the manifold is not necessary for the vanishing of $X_\alpha$; see \cref{ex:parallelchernmoser}.

The paper is organized as follows. In \cref{sec:sff}, we study the second fundamental form of real hypersurfaces that are semi-isometrically immersed in a Kähler manifold. The result in this section is crucial for the next section. In \cref{sec:cmw}, we give explicit formulas for the pseudohermitian curvature tensor and the Chern--Moser--Weyl tensor for general real hypersurfaces and prove \cref{thm:chern-moser}. In \cref{sec:Xalpha}, we compute the one-form $X_\alpha$ on the real ellipsoids of revolution in $\mathbb{C}^3$ and prove \cref{thm:xalpha}. In the last section, we give an example of a family of locally equivalent nonspherical CR manifolds with parallel Chern--Moser--Weyl tensor and, as a simple application of our formula \cref{e:chern-moser}, show that the hypersurfaces in this family are pairwise inequivalent globally.
\section{Real hypersurfaces in K\"ahler manifolds and second fundamental form}\label{sec:sff}
As briefly explained in the introduction, our approach to the Chern--Moser--Weyl tensor is via the 
Gauß equation, derived recently in \cite{son2019semi}. For this approach, we shall compute explicitly 
the second fundamental form of a real hypersurface in $\mathbb{C}^{n+1}$, viewed as a CR submanifold of
a Kähler manifold with an appropriate metric.

Let $M\subset \mathbb{C}^{n+1}$ be a nondegenerate real hypersurface defined by $\varrho = 0$ with $d\varrho \ne 0$ on~$M$. It is well-known (see, e.g., \cite{li--son,farris1986intrinsic}) that there is a vector field $\xi$ of type $(1,0)$ such that
\begin{equation}
	\xi \ \rfloor \ i\partial \bar{\partial} \vr = ir \bar{\partial}{\vr},
	\quad
	\partial\vr (\xi) = 1.
\end{equation}
The function $r$, given by $r = \rho_{j\kba} \xi^j \xi^{\bar{k}}$, is often called the \textit{transverse curvature} of the defining function.

We first suppose that the complex Hessian $\varrho _{j\kba}$ is nondegenerate so that $\varrho $ is a K\"ahler potential for a (pseudo-) Kähler metric $\omega$ on a neighborhood $U$ of~$M$ in $\CC^{n+1}$. In this situation, it can be shown that $r = |\xi|^2_{\omega} = |\partial\vr|^{-2}_{\omega}$, for $\omega$ being the K\"ahler metric with potential $\vr$, i.e., $\omega = i\partial\bar{\partial}\vr$. Moreover, $\iota \colon (M,\theta) \to (U,\omega)$ is a semi-isometric CR immersion in the sense of \cite{son2019semi}, i.e. $\iota^* \omega = d \theta$. 

Let $\nabla$ and $\widetilde{\nabla}$ be the Tanaka--Webster connection of $(M , \theta)$ and the Chern connection of $(U,\omega)$, {respectively}. Then the second fundamental form of $M$ is defined by the Gauß formula (see \cite{son2019semi})
\begin{equation} 
	\II(Z,W) := \widetilde{\nabla}_{\widetilde{Z}}\widetilde{W} - \nabla_ZW.
\end{equation} 
Here $\widetilde{Z}$ and $\widetilde{W}$ are smooth extensions of $Z$ and $W$ to a neighborhood of $M$ in $U$.

Taking the trace of $\II$ on {horizontal directions}, we obtain the $(1,0)$-mean curvature vector field $H$. Namely,
\begin{equation}
	H: = \frac1n\sum_{\alpha =1}^{n} \II(Z_{\aba} , Z_{\alpha}).
\end{equation}
Basic properties of $\II$ have been studied in \cite{son2019semi}. In particular, Gauß--Codazzi--Mainardi equations relating the Tanaka--Webster curvature and the torsion to the curvature of $\omega$ have been proved. In the following, the convention for the curvature operator of $\nabla$ is
\begin{equation}
	R(X,Y)Z = \nabla_X\nabla_YZ - \nabla_Y \nabla_X Z - \nabla_{[X,Y]}Z.
\end{equation}
The torsion $\mathbb{T}_{\nabla}$ of the Tanaka--Webster connection is nontrivial:
\begin{equation}
	\mathbb{T}_{\nabla} (X,Y) = \nabla_XY - \nabla_{Y}X - [X,Y].
\end{equation}
If $T$ is the characteristic direction associated to $\theta$, i.e., $T$ is the unique real vector field on $M$ that satisfies
\begin{equation}
	T \ \rfloor \ d\theta = 0,
	\quad 
	\theta(T) = 1,
\end{equation}
then the pseudohermitian torsion is defined by
\begin{equation}
	\tau X := \mathbb{T}_{\nabla} (T,X).
\end{equation}
The curvature of the Chern connection of $\omega$ will be denoted by $\widetilde{R}$. The aforementioned Gauß equations are given as follows:
\begin{prop}[Gauß equations \cite{son2019semi}]\label{prop:ge}
	Let $\iota \colon (M, \theta) \hookrightarrow (\mathcal{X},\omega)$ be a pseudohermitian CR submanifold of a Kähler manifold. Let $R$ and $\widetilde{R}$ be the curvature operators of the Tanaka--Webster and Chern connection on $M$ and $\mathcal{X}$, respectively. Then
	\begin{enumerate}
		\item for $X,Z \in \Gamma(T^{1,0}M)$ and $\overline{Y},\overline{W} \in \Gamma(T^{0,1} M) $, the following Gauß equation holds:
		\begin{align}\label{e:gauss}
		\langle \widetilde{R}(X,\overline{Y}) Z, \Wba\rangle
		& =
		\langle R(X,\overline{Y}) Z, \Wba\rangle
		+
		\langle \II (X,Z) , \II (\overline{Y}, \Wba) \rangle \notag \\
		& \qquad - |\mean |^2 \left(\langle \overline{Y} , Z \rangle \langle X ,\Wba \rangle + \langle X , \overline{Y} \rangle \langle Z , \Wba \rangle \right),
		\end{align}
		\item for $X,Z \in \Gamma(T^{1,0}M)$,
		\begin{equation}\label{e:gausstorsion}
		\langle \tau X , Z \rangle 
		=
		-i \langle \II(X,Z) , \overline{\mean} \rangle.
		\end{equation}
	\end{enumerate}
\end{prop}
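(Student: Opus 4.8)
The plan is to derive both identities directly from the Gauß formula $\II(Z,W) = \widetilde{\nabla}_{\widetilde{Z}}\widetilde{W} - \nabla_Z W$ by inserting it into the definitions of the two curvature operators and of the pseudohermitian torsion, and then extracting the tangential components by pairing against $\Wba \in T^{0,1}M$. At the outset I would fix a local frame of $T^{1,0}\mathcal{X}$ along $M$ adapted to the splitting $T^{1,0}\mathcal{X}|_M = T^{1,0}M \oplus N$, where $N$ is the rank-one holomorphic normal bundle spanned by $\xi$; this makes transparent which part of an ambient covariant derivative is tangential to $M$ and which is normal.

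For the Gauß equation I would begin with $\widetilde{R}(X,\overline{Y})Z = \widetilde{\nabla}_X\widetilde{\nabla}_{\overline{Y}}Z - \widetilde{\nabla}_{\overline{Y}}\widetilde{\nabla}_X Z - \widetilde{\nabla}_{[X,\overline{Y}]}Z$ and substitute $\widetilde{\nabla} = \nabla + \II$ at each occurrence. Upon pairing with $\Wba$, the purely tangential terms recombine into $\langle R(X,\overline{Y})Z,\Wba\rangle$, because the genuinely normal part of $\II$ is orthogonal to $\Wba$. The remaining contributions are quadratic in $\II$: one is the expected term $\langle \II(X,Z),\II(\overline{Y},\Wba)\rangle$, while the others arise from the components of $\II$ along the characteristic direction $T$. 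It is here that the semi-isometric hypothesis $\iota^{\ast}\omega = d\theta$ is used: since the ambient metric $\omega$ and the Levi form $d\theta$ disagree precisely in the $T$-direction, these leftover terms reduce, via the definition of $\mean$ as the horizontal trace of $\II$, to exactly $-|\mean|^2\bigl(\langle \overline{Y},Z\rangle\langle X,\Wba\rangle + \langle X,\overline{Y}\rangle\langle Z,\Wba\rangle\bigr)$. Throughout, the fact that $\widetilde{\nabla}$ is the Chern connection of a Kähler metric --- so that it is metric and torsion-free on $(1,0)$ fields --- keeps the bookkeeping under control.

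For the torsion identity I would use precisely this torsion-freeness: $\widetilde{\nabla}_X Z - \widetilde{\nabla}_Z X = [X,Z]$ for $X,Z \in T^{1,0}M$. Expressing $\tau X = \nabla_T X - \nabla_X T - [T,X]$ through the Gauß formula, the ambient bracket terms cancel and what survives is the normal part of $\widetilde{\nabla}_X Z$ measured against $T$. Pairing with $Z$ and invoking $T \,\rfloor\, d\theta = 0$, together with the identification of the $T$-component of $\II$ with $\mean$, should yield $\langle \tau X, Z\rangle = -i\langle \II(X,Z),\overline{\mean}\rangle$; the factor $-i$ and the conjugate on $\mean$ are forced because $T$ is real whereas $\mean$ is of type $(1,0)$, so that the real characteristic direction is recovered from $\mean$ and $\overline{\mean}$ with the usual complex-structure factor.

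I expect the main obstacle to be the careful treatment of the characteristic/transverse direction rather than the computation of any individual term. Because $\iota$ is only semi-isometric, the effective ``normal bundle'' contains both the holomorphic normal $\xi$ and the real field $T$, and the Weingarten-type decomposition of $\II$ along these two must be tracked separately and combined correctly. Pinning down the exact $|\mean|^2$ coefficients and the precise placement of the conjugations is the delicate part, and it is exactly at this point that the hypothesis $d\theta = \iota^{\ast}\omega$ performs the role played by isometry in the classical Gauß equation.
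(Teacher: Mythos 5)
First, a point of comparison: this paper does not prove \cref{prop:ge} at all --- it is quoted from \cite{son2019semi}, and the only proof-related content here is the remark that the argument given there for the strictly pseudoconvex case carries over to the Levi-nondegenerate one. Your outline must therefore be measured against the derivation in that reference, whose overall shape --- insert $\widetilde{\nabla} = \nabla + \II$ into the curvature operators, pair with $\Wba$, and exploit metric compatibility and torsion-freeness of the Chern connection of a K\"ahler metric --- your proposal does reproduce.

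The gap is that the proposal stops exactly where the content of \eqref{e:gauss} is decided, and the one concrete mechanism you do assert is only half right. The two $-|\mean|^2$ corrections do \emph{not} both come from ``components of $\II$ along the characteristic direction $T$''; in this setting $\II$ takes values in the span of the $(1,0)$ normal $\xi$, not along the real field $T$ (which is tangent to $M$). What is actually needed, and what your sketch neither states nor verifies, is: (i) the normality statements $\II(\overline{Y},Z) = -\langle Z,\overline{Y}\rangle\,\xi$ and $\mean = -\xi$ with $|\mean|^2 = |\xi|^2_{\omega}$; (ii) the Weingarten-type identity $\langle \widetilde{\nabla}_X \xi , \Wba\rangle = |\xi|^2 \langle X,\Wba\rangle$, a consequence of (i) and metric compatibility, which is what produces the term $-|\mean|^2\langle \overline{Y},Z\rangle\langle X,\Wba\rangle$ --- a purely normal-direction effect with no reference to $T$; and (iii) the identities $T = i(\xi-\bar{\xi})$ and $\theta([X,\overline{Y}]) = -i\langle X,\overline{Y}\rangle$, which, combined with the tangential part of $\widetilde{\nabla}_T Z - \nabla_T Z$, produce the remaining term $-|\mean|^2\langle X,\overline{Y}\rangle\langle Z,\Wba\rangle$. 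Without (i)--(iii) the coefficients and signs in \eqref{e:gauss} are not determined by your argument. Your torsion argument is closer to complete: torsion-freeness gives $[T,X]^{(0,1)} = i\widetilde{\nabla}_X\bar{\xi}$ once $T = i(\xi-\bar{\xi})$ is known, and pairing with $Z$ and using $\langle Z,\bar{\xi}\rangle = 0$ together with $\nabla T = 0$ yields \eqref{e:gausstorsion}; but this again hinges on the explicit formula for $T$ in terms of $\xi$, which you only gesture at via ``the usual complex-structure factor.''
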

We point out that although the proof given in \cite{son2019semi} is for the strictly pseudoconvex case, it also works for the Levi-nondegenerate case.
 
In order to apply these equations, we need to compute the second fundamental form $\II$ in terms of the defining function. In fact,
it was proved in \cite{son2019semi} that, using our notation, 
\begin{equation}
	\II(Z_{\aba} , Z_{\beta})
	= 
	-h_{\beta\aba} \xi,
\end{equation}
which implies $H = -\xi$ and $r = |H|^2$. 

Below, we shall compute the ``holomorphic" part $\II(Z_{\alpha} , Z_{\beta})$ of the second fundamental form. For this purpose, we need the following formula for the Tanaka--Webster connection forms $\omega_{\beta}{}^{\gamma}$ computed by Li and Luk \cite{li--luk} (see also \cite{webster1978pseudo}). Recall that the connection forms are defined by $\nabla Z_{\beta} = \omega_{\beta}{}^{\gamma} \otimes Z_{\gamma}$, and 
\begin{equation}\label{e:cf}
	\omega_{\beta}{}^{\gamma}
	=
	\left(h^{\gamma\sba} Z_{\mu} h_{\beta\sba} - \xi_{\beta}\delta_{\mu}^{\gamma}\right) \theta^{\mu} + \xi^{\gamma} h_{\beta\bar{\mu}} \theta^{\bar{\mu}} - iZ_{\beta} \xi^{\gamma} \theta,
\end{equation}
where $\xi_{\beta} = h_{\beta\sba} \xi^{\sba}$, see \cite[(2.19)]{li--luk}.

Then we have the following formula for $\II(Z_{\alpha} , Z_{\beta})$:

\begin{prop}
	Let $M$ be a nondegenerate real hypersurface in $\mathbb{C}^{n+1}$ defined by $\varrho = 0$ with $d\varrho\ne 0$. Assume that $\varrho$ has nondegenerate complex Hessian. Put $\omega = i\partial\bar{\partial} \vr$ and $\theta = \iota^{\ast}(i\bar{\partial}\vr)$. Then the inclusion $\iota \colon (M,\theta) \to (U,\omega)$ is a semi-isometric immersion. Moreover, if $\varrho _w \ne 0$ and $Z_{\alpha} = \partial_{\alpha} - (\varrho _{\alpha}/\varrho _w) \partial_w$, then
\begin{equation}\label{e:sff}
	\II(Z_{\alpha} , Z_{\beta})
	=
	\left(\varrho^{\kba} D^{\varrho}_{\alpha\beta}(\varrho_{\kba}) - h_{\alpha\beta}\right) \xi,
\end{equation}
	where $D^{\vr}_{\alpha\beta}$ is the 2nd order differential operator defined in \cref{e:dab} and $h_{\alpha\beta} = D^{\varrho}_{\alpha\beta}(\varrho)$.
\end{prop}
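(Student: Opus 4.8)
The plan is to exploit the structural fact, established for semi-isometric immersions in \cite{son2019semi}, that the second fundamental form $\II$ takes values in the holomorphic normal bundle $(T^{1,0}M)^{\perp}\subset T^{1,0}\CC^{n+1}|_M$. Since $M$ is a hypersurface, this bundle has rank one, and I would first check that it is spanned by $\xi$. From $\xi\,\rfloor\,i\partial\bar\partial\vr = ir\bar\partial\vr$ one reads off $\xi^{j} = r\vr^{j}$, and using the raising conventions of the introduction this gives at once $\langle Z_{\alpha},\bar\xi\rangle = r\,Z_{\alpha}\vr = 0$, $\langle\partial_m,\bar\xi\rangle = r\vr_m$, and $\langle\xi,\bar\xi\rangle = r$. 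Hence $\II(Z_{\alpha},Z_{\beta}) = c_{\alpha\beta}\,\xi$ for scalar functions $c_{\alpha\beta}$, and it suffices to recover $c_{\alpha\beta}$ by the single projection
\[
c_{\alpha\beta} = \frac{\langle\II(Z_{\alpha},Z_{\beta}),\bar\xi\rangle}{\langle\xi,\bar\xi\rangle} = \frac{1}{r}\,\langle\II(Z_{\alpha},Z_{\beta}),\bar\xi\rangle.
\]

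The decisive simplification is that the Tanaka--Webster term drops out of this pairing. Indeed $\nabla_{Z_{\alpha}}Z_{\beta}\in T^{1,0}M$ is tangential and therefore $\omega$-orthogonal to $\bar\xi$, so that $\langle\nabla_{Z_{\alpha}}Z_{\beta},\bar\xi\rangle = 0$ and consequently $\langle\II(Z_{\alpha},Z_{\beta}),\bar\xi\rangle = \langle\widetilde\nabla_{Z_{\alpha}}Z_{\beta},\bar\xi\rangle$. In particular I would not need the explicit connection forms \eqref{e:cf} for this computation; only the ambient Chern connection enters. Writing $Z_{\alpha} = Z_{\alpha}^{j}\partial_j$ with $Z_{\alpha}^{j} = \delta_{\alpha}^{j} - (\vr_{\alpha}/\vr_w)\,\delta^{j}_{n+1}$, and using that the Chern connection of $\omega = i\partial\bar\partial\vr$ satisfies $\widetilde\nabla_{\partial_i}\partial_j = \vr^{m\kba}\vr_{ij\kba}\,\partial_m$, the Leibniz rule gives
\[
\widetilde\nabla_{Z_{\alpha}}Z_{\beta} = \bigl(Z_{\alpha}Z_{\beta}^{m}\bigr)\partial_m + Z_{\alpha}^{i}Z_{\beta}^{j}\,\vr^{m\kba}\vr_{ij\kba}\,\partial_m.
\]
Pairing with $\bar\xi$, dividing by $r$, and using $\langle\partial_m,\bar\xi\rangle = r\vr_m$ together with $\vr^{m\kba}\vr_m = \vr^{\kba}$, I obtain
\[
c_{\alpha\beta} = \bigl(Z_{\alpha}Z_{\beta}^{m}\bigr)\vr_m + \vr^{\kba}\,Z_{\alpha}^{i}Z_{\beta}^{j}\vr_{ij\kba} = \bigl(Z_{\alpha}Z_{\beta}^{m}\bigr)\vr_m + \vr^{\kba}D^{\vr}_{\alpha\beta}(\vr_{\kba}),
\]
where the second summand is already in the desired form because $Z_{\alpha}^{i}Z_{\beta}^{j}\varphi_{ij} = D^{\vr}_{\alpha\beta}(\varphi)$ by construction of the operator in \eqref{e:dab}.

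It remains to identify the first summand with $-h_{\alpha\beta}$. Here the only nonconstant component of $Z_{\beta}^{m}$ is $Z_{\beta}^{n+1} = -\vr_{\beta}/\vr_w$, and $\vr_{n+1} = \vr_w$, so $(Z_{\alpha}Z_{\beta}^{m})\vr_m = -\vr_w\,Z_{\alpha}(\vr_{\beta}/\vr_w)$. Expanding the quotient rule and the action of $Z_{\alpha} = \partial_{\alpha} - (\vr_{\alpha}/\vr_w)\partial_w$ reproduces exactly the four terms of $D^{\vr}_{\alpha\beta}(\vr) = h_{\alpha\beta}$ with opposite sign, giving $(Z_{\alpha}Z_{\beta}^{m})\vr_m = -h_{\alpha\beta}$. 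Combining the two pieces yields $c_{\alpha\beta} = \vr^{\kba}D^{\vr}_{\alpha\beta}(\vr_{\kba}) - h_{\alpha\beta}$, so that $\II(Z_{\alpha},Z_{\beta}) = \bigl(\vr^{\kba}D^{\vr}_{\alpha\beta}(\vr_{\kba}) - h_{\alpha\beta}\bigr)\xi$, which is \eqref{e:sff}.

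I expect the main obstacle to be conceptual rather than computational: securing the normality of $\II$ (so that a single projection onto $\bar\xi$ captures the full tensor) and verifying cleanly that $\xi$ spans $(T^{1,0}M)^{\perp}$ with $\langle\xi,\bar\xi\rangle = r$ and $\langle\partial_m,\bar\xi\rangle = r\vr_m$. Once the orthogonality relations and the normality of $\II$ are in hand, the remaining work reduces to the two short algebraic contractions $\vr^{m\kba}\vr_{ij\kba}\vr_m = \vr^{\kba}\vr_{ij\kba}$ and $(Z_{\alpha}Z_{\beta}^{m})\vr_m = -h_{\alpha\beta}$, both purely mechanical.
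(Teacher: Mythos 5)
Your proof is correct, and it takes a genuinely different route from the paper's. The paper computes both covariant derivatives explicitly: $\widetilde{\nabla}_{Z_{\alpha}}Z_{\beta}$ from the Christoffel symbols of $\omega$, and $\nabla_{Z_{\alpha}}Z_{\beta}$ from the Li--Luk formula \cref{e:cf} for the Tanaka--Webster connection forms; it then subtracts the two and grinds through the identity for $h^{\gamma\bar{\mu}}Z_{\alpha}(h_{\beta\bar{\mu}})$ (the long display \cref{e:a} and its contraction \cref{e:b}) to see that the difference collapses to a multiple of $\xi$. You instead take the normality of $\II$ as input, observe that the rank-one normal bundle is spanned by $\xi$ (your orthogonality relations $\langle Z_{\alpha},\bar\xi\rangle=0$, $\langle\partial_m,\bar\xi\rangle=r\vr_m$, $\langle\xi,\bar\xi\rangle=r\ne 0$ all check out, the last being nonzero because $r\,|\partial\vr|^2=\partial\vr(\xi)=1$), and recover the coefficient from the single pairing $r^{-1}\langle\widetilde{\nabla}_{Z_{\alpha}}Z_{\beta},\bar\xi\rangle$, which eliminates the Tanaka--Webster term and with it the entire computation of the connection forms. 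Your two remaining contractions are verified correctly ($Z^i_{\alpha}Z^j_{\beta}\vr_{ij\bar k}=D^{\vr}_{\alpha\beta}(\vr_{\bar k})$ by definition of the operator, and $(Z_{\alpha}Z^m_{\beta})\vr_m=-\vr_w Z_{\alpha}(\vr_{\beta}/\vr_w)=-h_{\alpha\beta}$, which is exactly the quotient-rule identity the paper also records). What each approach buys: yours is shorter and conceptually cleaner, but it rests entirely on the a priori normality of the $(2,0)$-part $\II(Z,W)$, which the paper never assumes --- its brute-force subtraction \emph{proves} that $\II(Z_{\alpha},Z_{\beta})$ is proportional to $\xi$ as a byproduct. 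You should make sure the normality statement for both arguments of type $(1,0)$ is actually available in \cite{son2019semi}; if it is only stated there for the mixed part $\II(Z_{\bar\alpha},Z_{\beta})=-h_{\beta\bar\alpha}\xi$, you can still close the gap with a short argument: metric compatibility of the Chern and Tanaka--Webster connections together with $\iota^*\omega=d\theta$ gives $\langle\II(Z,W),\bar V\rangle=\langle W,\overline{\nabla_{\bar Z}V-\widetilde{\nabla}_{\bar Z}V}\rangle=\langle W,\overline{-\II(\bar Z,V)}\rangle$, which is a multiple of $\langle W,\bar\xi\rangle=0$ by the known mixed formula. With that one point secured, your proof is a complete and more economical alternative to the paper's.
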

\begin{proof}
	 Let $\widetilde{\nabla}$ be the Chern connection of $\omega:=i\partial\bar{\partial}\varrho $ and let $\Gamma^k_{jl}$ be its Christoffel symbols given by $\Gamma^k_{jl} = \varrho^{k\bar{m}}\partial_{j}\varrho_{l\bar{m}}$. Put
	\begin{equation}\label{e:2.13}
	U^k_{\alpha\beta}
	=
	\varrho^{k\bar{l}} D_{\alpha\beta}^{\varrho}(\varrho_{\bar{l}})
	=
	\Gamma^k_{\alpha\beta} - \frac{\varrho _{\alpha}}{\varrho _w} \Gamma^{k}_{w\beta} 
	-
	\frac{\varrho _{\beta}}{\varrho _w} \Gamma^{k}_{w\alpha}
	+
	\frac{\varrho _{\alpha}\varrho _{\beta}}{\varrho _w^2} \Gamma^{k}_{ww}.
	\end{equation}
	Since $Z_{\alpha} = \partial_{\alpha} - (\vr_{\alpha}/\vr_w)\partial_w$, we have, after some simplification,
	\begin{equation}
	\widetilde{\nabla}_{Z_{\alpha}}Z_{\beta}
	=
	U^k_{\alpha\beta} \partial_k - (h_{\alpha\beta}/\varrho _w)\partial_w.
	\end{equation}
	From \cref{e:cf}, we obtain
	\begin{equation}
	{\nabla}_{Z_{\alpha}}Z_{\beta}
	=
	(h^{\gamma\sba}Z_{\alpha} h_{\beta\sba} - \xi_{\beta} \delta _{\alpha}^{\gamma})\partial_{\gamma} + (1/\varrho _w)( \varrho _{\alpha}\xi_{\beta} - \varrho _{\gamma}h^{\gamma\sba} Z_{\alpha}h_{\beta\sba})\partial_w.
	\end{equation}
 We obtain,
	\begin{align}\label{e:sfftrans2}
	\II(Z_{\alpha} , Z_{\beta})
	 = & ~
	\widetilde{\nabla}_{Z_{\alpha}}Z_{\beta} - {\nabla}_{Z_{\alpha}}Z_{\beta} \notag \\
	= & ~ 
	\left(U^{\gamma}_{\alpha\beta} - h^{\gamma\bar{\mu}} Z_{\alpha} (h_{\beta\bar{\mu}}) + \xi_{\beta} \delta^{\gamma}_{\alpha}
	\right) \partial_{\gamma} \notag \\
	& + (1/\varrho _w)\left(\varrho _{\gamma} h^{\gamma\bar{\mu}} Z_{\alpha} (h_{\beta\bar{\mu}}) - \varrho _{\alpha}\xi_{\beta} - h_{\alpha\beta} + \varrho _w U^w_{\alpha\beta}\right) \partial_w.
	\end{align}
	To simplify \cref{e:sfftrans2}, we compute directly that
	\begin{equation}
		Z_{\alpha}(\varrho_{\beta}/\varrho_w)
		=
		\frac{h_{\alpha\beta}}{\varrho_w},
		\quad 
		Z_{\alpha}(\varrho_{\bba}/\varrho_{\wba})
		=
		\frac{h_{\alpha\bba}}{\varrho_{\wba}},
	\end{equation}
	hence
	\begin{align}\label{e:a}
		Z_{\alpha}(h_{\beta\bar{\mu}})
		=
		\varrho_{\beta\bar{\mu}\alpha}- \frac{\varrho_{\alpha}\varrho_{\beta\bar{\mu} w}}{\varrho_w} - \frac{h_{\alpha\beta}\varrho_{\bar{\mu}w}}{\varrho_{w}} 
		- \frac{\varrho_{\beta}\varrho_{\bar{\mu}w\alpha}}{\varrho_w}
		+
		\frac{\varrho_{\alpha}\varrho_{\beta}\varrho_{\bar{\mu}ww}}{\varrho_w^2}
		-
		\frac{h_{\alpha\bar{\mu}}\varrho_{\beta \wba}}{\varrho_{\wba}} \notag \\
		- \frac{\varrho_{\bar{\mu}}\varrho_{\beta \wba \alpha}}{\varrho_{\wba}}
		+
		\frac{\varrho_{\alpha}\varrho_{\bar{\mu}}\varrho_{\beta\wba w}}{|\varrho_w|^2}
		+
		\frac{h_{\alpha\beta}\varrho_{\bar{\mu}}\varrho_{\wba w}}{|\varrho_w|^2}
		+
		\frac{h_{\alpha\bar{\mu}}\varrho_{\beta}\varrho_{\wba w}}{|\varrho_w|^2}
		+
		\frac{\varrho_{\bar{\mu}}\varrho_{\beta}\varrho_{\wba w \alpha}}{|\varrho_w|^2}
		+
		\frac{\varrho_{\alpha}\varrho_{\bar{\mu}}\varrho_{\beta}\varrho_{\wba w w}}{|\varrho_w|^2\varrho_w}.
	\end{align}
	Multiplying \cref{e:a} with $h^{\gamma\bar{\mu}}$,	we obtain, after simplification, that
	\begin{equation}\label{e:b}
		h^{\gamma\bar{\mu}} Z_{\alpha} (h_{\beta\bar{\mu}}) = 
		U^{\gamma}_{\alpha\beta} - \frac{\varrho^{\gamma}}{|\partial\varrho|^2}\varrho^{\kba}D^{\varrho}_{\alpha\beta}(\varrho_{\kba})
		+
		\frac{\varrho^{\gamma}h_{\alpha\beta}}{|\partial\varrho|^2}
		+\left(\frac{\varrho_{\beta}\varrho_{w\wba}}{|\varrho_w|^2}
		-\frac{\varrho_{\beta\wba}}{\varrho_{\wba}}\right) \delta_{\alpha}^{\gamma}.
	\end{equation}
	Plugging $\xi^{\gamma} = \varrho^{\gamma}/|\partial\varrho|^2$ into \cref{e:b}, we find that 
	\begin{equation}\label{e:c}
		 U^{\gamma}_{\alpha\beta} - h^{\gamma\bar{\mu}} Z_{\alpha} (h_{\beta\bar{\mu}}) + \xi_{\beta} \delta^{\gamma}_{\alpha}
		 =
		 \left(\varrho^{\kba}D^{\varrho}_{\alpha\beta}(\varrho_{\kba}) - h_{\alpha\beta}\right)\xi^{\gamma}.
	\end{equation}
	Similarly, using \cref{e:b} and \cref{e:2.13}, we obtain that
	\begin{equation}\label{e:d}
		\varrho _{\gamma} h^{\gamma\bar{\mu}} Z_{\alpha} (h_{\beta\bar{\mu}}) - \varrho _{\alpha}\xi_{\beta} - h_{\alpha\beta} + \varrho _w U^w_{\alpha\beta}
		=
		\frac{\vr_w \vr^w}{|\partial\vr|^2} \left(\varrho^{\kba}D^{\varrho}_{\alpha\beta}(\varrho_{\kba}) - h_{\alpha\beta}\right)
	\end{equation}
	Plugging \cref{e:c,e:d} into \cref{e:sfftrans2}, we find that
	\begin{equation} 
		\II(Z_{\alpha} , Z_{\beta})
		= \left(\varrho^{\kba}D^{\varrho}_{\alpha\beta}(\varrho_{\kba}) - h_{\alpha\beta}\right) \xi^j \partial_j,
	\end{equation} 
	which finishes the proof.
\end{proof}	
In a local frame $Z_{\alpha}$, the torsion tensor $\tau$ has components denoted by $A_{\alpha\beta}$, i.e., 
\begin{equation}
	\tau Z_{\alpha} = A_{\alpha}{}^{\bba} Z_{\bba}.
\end{equation}
We obtain the following formula for the torsion tensor which may be of independent interest.

\begin{cor}\label{cor:torsion}
Let $M$ be defined by $\vr = 0$ with $d\vr \ne 0$ and $\theta = i\bar{\partial}\vr$. Suppose that $\vr_w \neq 0$, then the torsion tensor $A_{\alpha\beta}$ in the local frame $Z_{\alpha}:= \partial_\alpha - (\vr_\alpha/\vr_w)\, \partial_w$ is given by:
\begin{equation}\label{e:torsion}
	-i A_{\alpha\beta}
	=
	\xi^{\kba} D^{\vr}_{\alpha\beta}(\vr_{\kba}) - |\xi|^2 h_{\alpha\beta}.
\end{equation}
\end{cor}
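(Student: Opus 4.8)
The plan is to read off the torsion directly from the second Gauß equation \eqref{e:gausstorsion} of \cref{prop:ge}, feeding in the formula \eqref{e:sff} for the holomorphic part of the second fundamental form together with the identities $\mean=-\xi$ and $r=|\mean|^2=|\xi|^2$ recorded just before the preceding proposition. Since $\II(Z_\alpha,Z_\beta)$ is a scalar multiple of the single normal vector $\xi$, the entire computation collapses to one scalar pairing, so no genuinely new input beyond \eqref{e:sff} is required.

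First I would unwind the left-hand side of \eqref{e:gausstorsion}. Writing $\tau Z_\alpha = A_\alpha{}^{\mba}Z_{\mba}$ and pairing with $Z_\beta$, the (bilinearly extended) metric couples the opposite types $Z_{\mba}$ and $Z_\beta$ through $h_{\beta\mba}$, so that $\langle \tau Z_\alpha, Z_\beta\rangle = A_\alpha{}^{\mba}h_{\beta\mba}=A_{\alpha\beta}$; that is, the left side of \eqref{e:gausstorsion} is exactly the lowered torsion component. Next I would substitute $\II(Z_\alpha,Z_\beta)=\bigl(\varrho^{\kba}D^{\vr}_{\alpha\beta}(\varrho_{\kba})-h_{\alpha\beta}\bigr)\xi$ from \eqref{e:sff} and $\overline{\mean}=-\bar\xi$ into the right-hand side. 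The only pairing that survives is $\langle \xi,\bar\xi\rangle = |\xi|^2=r$, so \eqref{e:gausstorsion} becomes $A_{\alpha\beta} = i\,|\xi|^2\bigl(\varrho^{\kba}D^{\vr}_{\alpha\beta}(\varrho_{\kba})-h_{\alpha\beta}\bigr)$, equivalently $-iA_{\alpha\beta}=|\xi|^2\varrho^{\kba}D^{\vr}_{\alpha\beta}(\varrho_{\kba})-|\xi|^2 h_{\alpha\beta}$; here the sign is produced by the factor $-i$ in \eqref{e:gausstorsion} together with $\overline{\mean}=-\bar\xi$.

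To match the stated form it remains to absorb the factor $|\xi|^2$ into $\varrho^{\kba}$. From the defining relation $\xi^j\vr_{j\kba}=r\vr_{\kba}$ one gets $\xi^j=r\vr^j$ with $r=|\partial\vr|^{-2}=|\xi|^2$, and conjugating yields $\xi^{\kba}=|\xi|^2\varrho^{\kba}$. Hence $|\xi|^2\varrho^{\kba}D^{\vr}_{\alpha\beta}(\varrho_{\kba})=\xi^{\kba}D^{\vr}_{\alpha\beta}(\vr_{\kba})$, and we arrive at $-iA_{\alpha\beta}=\xi^{\kba}D^{\vr}_{\alpha\beta}(\vr_{\kba})-|\xi|^2 h_{\alpha\beta}$, which is \eqref{e:torsion}.

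The one point that needs care — and the main obstacle — is that \eqref{e:sff} was derived under the assumption that the full complex Hessian $\vr_{j\kba}$ is nondegenerate, so that $\vr$ is a genuine Kähler potential and the Chern connection of $\omega$ is available, whereas \cref{cor:torsion} is asserted only under the weaker hypothesis that $M$ is Levi-nondegenerate and $\vr_w\ne 0$. To close this gap I would observe that both sides of \eqref{e:torsion} are fixed rational expressions in the $3$-jet of $\vr$ at a point, with denominators equal to nonvanishing powers of $\vr_w$ and $\det(h_{\alpha\bba})$; they are therefore continuous on the locus where the Levi form is nondegenerate and $\vr_w\ne 0$. Since the sublocus on which the full Hessian is nondegenerate is open and dense there, and the identity holds on it by the computation above, it extends to the entire locus by continuity. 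Apart from this density argument the proof is a direct substitution, the only delicate bookkeeping being the type-pairing conventions used in evaluating $\langle\cdot,\cdot\rangle$.
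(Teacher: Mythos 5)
Your main computation is correct and is essentially the paper's own argument: you pair \eqref{e:gausstorsion} with \eqref{e:sff}, use $\mean=-\xi$, $\langle\xi,\bar\xi\rangle=|\xi|^2=r=|\partial\vr|^{-2}$, and $\xi^{\kba}=|\xi|^2\vr^{\kba}$ to land exactly on \eqref{e:torsion}; the sign bookkeeping is right.

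The gap is in your final step, where you remove the hypothesis that the full complex Hessian $\vr_{j\kba}$ is nondegenerate. Your continuity argument is over the \emph{points} of the locus where the Levi form is nondegenerate and $\vr_w\ne 0$, and it requires the sublocus $\{\det(\vr_{j\kba})\ne 0\}$ to be dense there. That is false in general: for a fixed defining function the full Hessian can be degenerate \emph{identically}, so the sublocus is empty and there is nothing to take a limit from. The standard example is $\vr=\operatorname{Im}w-\|z\|^2$, for which $\vr_{j\kba}=\mathrm{diag}(-1,\dots,-1,0)$ everywhere, even though $M$ is strictly pseudoconvex and $\vr_w\ne0$. The paper instead perturbs the \emph{defining function}, not the point: it replaces $\vr$ by $\widetilde{\vr}=\vr+C\vr^2$, which for $C$ large has nondegenerate Hessian, leaves $\theta=-i\partial\vr$ and the frame $Z_\alpha$ unchanged on $M$ (so the left-hand side $A_{\alpha\beta}$ is untouched), and then checks by hand that the right-hand side of \eqref{e:torsion} is also unchanged, using $\widetilde{\xi}^{\kba}=\xi^{\kba}$, $|\widetilde{\xi}|^2=|\xi|^2+2C$, $D^{\widetilde{\vr}}_{\alpha\beta}(\widetilde{\vr}_{\kba})=D^{\vr}_{\alpha\beta}(\vr_{\kba})+2C\vr_{\kba}h_{\alpha\beta}$, and $\xi^{\kba}\vr_{\kba}=1$, so that the two $2C$-terms cancel. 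A version of your idea could be salvaged by working with density in the space of $3$-jets of defining functions rather than of points, but that requires an independent rational formula for $A_{\alpha\beta}$ in the $3$-jet valid without the Hessian assumption (e.g.\ via the connection form \eqref{e:cf}), and is more delicate than the explicit perturbation; as written, your argument does not close the case $\det(\vr_{j\kba})=0$.
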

An alternative formula for the torsion was given in \cite[Theorem~1.1]{li2006explicit}. In fact, it was proved that, for strictly plurisubharmonic $\vr$, 
\begin{equation}\label{e:li--luk--torison}
	A_{\alpha\beta} = -\frac{i}{|\partial\vr|^2} Z_{\alpha}(\vr_{\kba}) Z_{\beta}(\vr^{\kba}).
\end{equation} 
One can check that \cref{e:torsion} and \cref{e:li--luk--torison} are equivalent when $\vr$ is strictly plurisubharmonic.
\begin{proof}[Proof of \cref{cor:torsion}]
	We first assume that $\vr_{j\kba}$ is invertible. Since $A_{\alpha\beta} = \langle \tau Z_{\alpha} , Z_{\beta}\rangle$, it follows from the Gauß equation \cref{e:gausstorsion} that
	\begin{align}
		A_{\alpha\beta} 
		& = -i \langle \II(Z_{\alpha},Z_{\beta}) , \bar{H} \rangle \notag \\
		& =
		i \II_{\alpha\beta} |H|^2 \notag \\
		& =
		\frac{i}{|\partial\vr|^2} \left(\vr^{\bar{k}} D^{\vr}_{\alpha\beta}(\vr_{\bar{k}}) - h_{\alpha\beta}\right) \notag \\
		& = i\left(\xi^{\kba} D^{\vr}_{\alpha\beta}(\vr_{\kba}) - |\xi|^2 h_{\alpha\beta}\right).
	\end{align}
	Here we have used the fact that $H = -\xi$ and $|H|^2 = r = |\partial\vr|^{-2}$. Thus, \cref{e:torsion} is proved in the case when $\vr_{j\kba}$ is nondegenerate.
	
	To remove the assumption that $\vr_{j\kba}$ is invertible, we use an idea taken from \cite{li--luk}. Precisely, we can replace $\vr$ by $\widetilde{\vr}: = \vr + C \vr^2$, for $C>0$ large enough, so that $\widetilde{\vr}_{j\kba}$ is non-degenerate. Observe that $\theta = -i\partial\vr = -i\partial\widetilde{\vr}$. To conclude the proof, we need to verify that the right-hand side of \cref{e:torsion} does not change when $\vr$ is replaced by $\widetilde{\vr}$. Indeed, we can verify directly (or alternatively use \cite[Lemma~7.1]{son2019semi}) that the following holds on~$M$:
	\begin{equation} 
		\widetilde{\xi}^{\kba} = \xi^{\kba},
		\quad
		|\widetilde{\xi}|_{\widetilde \omega}^2 = |\xi|_\omega^2 + 2C.
		\quad
	\end{equation} 
	Moreover, on $M$, $D^{\widetilde{\vr}}_{\alpha\beta} = D^{\vr}_{\alpha\beta}$ and thus
	\begin{equation} 
		D^{\widetilde{\vr}}_{\alpha\beta}(\widetilde{\vr}_{\kba})
		=D^{\vr}_{\alpha\beta}(\vr_k) + 2C \vr_{\kba} h_{\alpha\beta}.
	\end{equation} 
	Consequently, the right-hand side of \cref{e:torsion} does not change when $\vr$ is replaced by $\widetilde{\vr}$, since $\vr_{\bar k} \xi^{\bar k} = 1$, which completes the proof.
\end{proof}

\section{The Chern--Moser--Weyl tensor}\label{sec:cmw}
Similarly to the definitions of $D^{\vr}_{\alpha\beta}$ and $D^{\vr}_{\alpha\bba}$, we define a 4th order linear differential operator $\mathcal{R}^{\vr}_{\alpha\bba\gamma\sba}$ by the following equation:
\begin{equation} 
\mathcal{R}^{\vr}_{\alpha\bba\gamma\sba}(\varphi)
=
\varphi_{Z\Zba Z\Zba}(Z_{\alpha},Z_{\bba}, Z_{\gamma}, Z_{\sba}).
\end{equation} 
In particular, 
\begin{align} 
\mathcal{R}^{\vr}_{\alpha\bba\gamma\sba}(\vr) 
= &~
\vr_{\alpha\bba\gamma\sba} 
- \frac{\vr_{\alpha}\vr_{w\bba\gamma\sba}}{\vr_w}
- \frac{\vr_{\bba}\vr_{\alpha\wba\gamma\sba}}{\vr_{\wba}} 
- \frac{\vr_{\gamma}\vr_{\alpha\bba w\sba}}{\vr_{w}}
- \frac{\vr_{\sba}\vr_{\alpha\bba\gamma\wba}}{\vr_{\wba}}
 + \frac{\vr_{\alpha} \vr_{\bba}\vr_{w\wba\gamma\sba}}{|\vr_w|^2} 
 + \frac{\vr_{\alpha} \vr_{\gamma}\vr_{w\bba w\sba}}{\vr_w^2} \notag \\
& ~ + \frac{\vr_{\alpha} \vr_{\sba}\vr_{w\bba \gamma\wba}}{|\vr_w|^2}
+ \frac{\vr_{\bba} \vr_{\gamma}\vr_{\alpha \wba w\sba}}{|\vr_w|^2}
+ \frac{\vr_{\bba} \vr_{\sba}\vr_{\alpha \wba \gamma \wba}}{\vr_{\wba}^2}
+ \frac{\vr_{\gamma} \vr_{\sba}\vr_{\alpha\bba w\wba}}{|\vr_w|^2} 
- \frac{\vr_{\alpha} \vr_{\bba} \vr_{\gamma} \vr_{w\wba w\sba}}{|\vr_w|^2 \vr_w} \notag \\
& ~ - \frac{\vr_{\alpha} \vr_{\bba} \vr_{\sba} \vr_{w\wba \gamma \wba}}{|\vr_w|^2 \vr_{\wba}}
- \frac{\vr_{\bba} \vr_{\gamma} \vr_{\sba} \vr_{\alpha\wba w\wba}}{|\vr_w|^2 \vr_{\wba}} 
- \frac{\vr_{\alpha} \vr_{\gamma} \vr_{\sba} \vr_{w\bba w \wba}}{|\vr_w|^2 \vr_{w}}
+ \frac{\vr_{\alpha}\vr_{\bba}\vr_{\gamma} \vr_{\sba}\vr_{w\wba w\wba}}{|\vr_w|^4}.
\end{align} 
To work with general defining functions, we need to introduce some notation. Let $\psi_{j\kba} = \vr_{j\kba} + (1-r)\vr_j\vr_{\kba}$, then $\det(\psi_{j\kba}) = J(\vr)$ (see \cite{li--son}), and hence $\psi_{j\kba}$ is invertible. Let $\psi^{\kba j}$ be the inverse of $\psi_{j\kba}$ and
\begin{equation} 
	h^{j\kba} = \psi^{j\kba} - \xi^j\xi^{\kba}.
\end{equation} 
Then $h^{\bba\alpha}$ is the inverse of $h_{\alpha\bba}$, which can be verified by a direct computation, and when $|\partial\vr|^2 \ne 0$,
\begin{equation} 
	h^{j\kba} = \vr^{j\kba} - |\partial\vr|^2 \xi^{j}\xi^{\bar k}.
\end{equation} 

Our main result in this section is the following
\begin{thm}\label{thm:chern--moser-general}
	Let $M$ be a nondegenerate real hypersurface in $\mathbb{C}^{n+1}$ defined by $\vr = 0$ with $d\vr \ne 0$ and $J(\vr) \ne 0$ on $M$. Assume $\vr_w \neq 0$. Then the pseudohermitian curvature is given by
	\begin{align}\label{e:pc}
	R_{\alpha\bba\gamma\sba}
	& =
	- \mathcal{R}^{\vr}_{\alpha\bba\gamma\sba}(\vr)
	+ h^{j\kba} D^{\vr}_{\alpha\gamma}(\vr_{\kba}) D^{\vr}_{\bba\sba}(\vr_{j})
	+
	|\xi|^2 \left(h_{\alpha\bba} h _{\gamma\sba} + h_{\alpha\sba} h _{\gamma\bba}\right) \notag \\
	& \quad 
	+
	h_{\bba\sba}\xi^{\kba} D^{\vr}_{\alpha\gamma}(\vr_{\kba}) + h_{\alpha\gamma} \xi^{j} D^{\vr}_{\bba\sba}(\vr_{j}) - |\xi|^2 h_{\alpha\gamma} h_{\bba\sba}.
	\end{align}
	and the Chern--Moser--Weyl tensor is given by
	\begin{align}\label{e:cm}
	S_{\alpha\bba\gamma\sba} & =
	-\mathcal{R}^{\vr}_{\alpha\bba\gamma\sba}(\vr) + h^{j\kba} D^{\vr}_{\alpha\gamma}(\vr_{\kba}) D^{\vr}_{\bba\sba}(\vr_{j}) +
	h_{\bba\sba}\xi^{\kba} D^{\vr}_{\alpha\gamma}(\vr_{\kba}) + h_{\alpha\gamma} \xi^{j} D^{\vr}_{\bba\sba}(\vr_{j}) - |\xi|^2 h_{\alpha\gamma} h_{\bba\sba} \notag \\
	& \quad + \frac{1}{n+2}\left(h_{\gamma\sba} D^{\vr}_{\alpha\bba} + h_{\gamma\bba} D^{\vr}_{\alpha\sba} + h_{\alpha\bba} D^{\vr}_{\gamma\sba} + h_{\alpha\sba} D^{\vr}_{\gamma\bba}\right) \log J(\vr) \notag \\
	& \quad - \frac{1}{(n+1)(n+2)}\left(h_{\alpha\bba} h _{\gamma\sba} + h_{\alpha\sba} h _{\gamma\bba}\right) h^{\epsilon\bar{\delta}}D^{\vr}_{\epsilon\bar{\delta}}\log J(\vr),
	\end{align} 
in the local frame $Z_{\alpha}:= \partial_\alpha - (\vr_\alpha/\vr_w)\, \partial_w$.
\end{thm}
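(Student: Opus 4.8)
The plan is to read off both formulas from the Gauß equation of \cref{prop:ge}, applied to the semi-isometric immersion $\iota\colon(M,\theta)\hookrightarrow(U,\omega)$ with $\omega=i\partial\bar\partial\vr$, combined with the explicit second fundamental form \cref{e:sff}. I would first assume that the complex Hessian $\vr_{j\kba}$ is nondegenerate, so that $\omega$ is a bona fide (pseudo-)Kähler metric, and remove this hypothesis at the very end. For the Kähler metric $g_{j\kba}=\vr_{j\kba}$ the Chern curvature is given by the potential formula $\widetilde R_{i\jba k\lba}=-\vr_{i\jba k\lba}+\vr^{p\bar q}\vr_{ik\bar q}\vr_{p\jba\lba}$. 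Since $Z_\alpha=\partial_\alpha-(\vr_\alpha/\vr_w)\partial_w$ has coordinate components $Z_\alpha^\alpha=1$, $Z_\alpha^w=-\vr_\alpha/\vr_w$, the operators $D^\vr_{\alpha\beta}$ and $\mathcal R^\vr_{\alpha\bba\gamma\sba}$ are, by their very definition, the contractions of $\partial_i\partial_k$ and of $\partial_i\partial_\jba\partial_k\partial_\lba$ against these components; hence contracting $\widetilde R_{i\jba k\lba}$ with $Z_\alpha^iZ_\bba^\jba Z_\gamma^kZ_\sba^\lba$ yields at once
\[
\widetilde R_{\alpha\bba\gamma\sba}=-\mathcal R^\vr_{\alpha\bba\gamma\sba}(\vr)+\vr^{j\kba}D^\vr_{\alpha\gamma}(\vr_{\kba})D^\vr_{\bba\sba}(\vr_j).
\]
Substituting this, the second fundamental form $\II(Z_\alpha,Z_\gamma)=c_{\alpha\gamma}\xi$ with $c_{\alpha\gamma}=\vr^{\kba}D^\vr_{\alpha\gamma}(\vr_{\kba})-h_{\alpha\gamma}$, and the facts $\mean=-\xi$, $|\mean|^2=|\xi|^2=|\partial\vr|^{-2}$, into \cref{e:gauss}, and using $\xi^{\kba}=|\xi|^2\vr^{\kba}$ together with $h^{j\kba}=\vr^{j\kba}-|\partial\vr|^2\xi^j\xi^{\kba}$ to trade $\vr^{j\kba}$ for $h^{j\kba}$, I expect \cref{e:pc} to fall out after collecting terms: the product $-|\xi|^2c_{\alpha\gamma}\overline{c_{\beta\sigma}}$ produces the summands involving $\xi^{\kba}D^\vr_{\alpha\gamma}(\vr_{\kba})$, $\xi^jD^\vr_{\bba\sba}(\vr_j)$ and $-|\xi|^2h_{\alpha\gamma}h_{\bba\sba}$, while the $|\mean|^2$-part of the Gauß equation produces $|\xi|^2(h_{\alpha\bba}h_{\gamma\sba}+h_{\alpha\sba}h_{\gamma\bba})$.

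For the Chern--Moser--Weyl tensor I would use that $S_{\alpha\bba\gamma\sba}$ is the totally trace-free part of $R_{\alpha\bba\gamma\sba}$, namely
\[
S_{\alpha\bba\gamma\sba}=R_{\alpha\bba\gamma\sba}-\tfrac{1}{n+2}\big(R_{\alpha\bba}h_{\gamma\sba}+R_{\gamma\sba}h_{\alpha\bba}+R_{\alpha\sba}h_{\gamma\bba}+R_{\gamma\bba}h_{\alpha\sba}\big)+\tfrac{R}{(n+1)(n+2)}\big(h_{\alpha\bba}h_{\gamma\sba}+h_{\alpha\sba}h_{\gamma\bba}\big),
\]
with $R_{\alpha\bba}=h^{\gamma\sba}R_{\alpha\bba\gamma\sba}$ and $R=h^{\alpha\bba}R_{\alpha\bba}$. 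The crux is then the Webster--Ricci identity
\[
R_{\alpha\bba}=-D^\vr_{\alpha\bba}\log J(\vr)+(n+1)|\xi|^2h_{\alpha\bba}.
\]
Granting it, matching the $D^\vr\log J$-terms and the $(h_{\alpha\bba}h_{\gamma\sba}+h_{\alpha\sba}h_{\gamma\bba})$-terms separately in the trace-free decomposition converts it into \cref{e:cm}; in this matching the coefficient $(n+1)|\xi|^2$ of $h_{\alpha\bba}$ is exactly what is needed to absorb the summand $|\xi|^2(h_{\alpha\bba}h_{\gamma\sba}+h_{\alpha\sba}h_{\gamma\bba})$ of \cref{e:pc} and to produce the factor $1/\big((n+1)(n+2)\big)$ in the last line of \cref{e:cm}.

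The main obstacle is to establish the Ricci identity. Tracing \cref{e:pc} with $h^{\gamma\sba}$, the term $|\xi|^2(h_{\alpha\bba}h_{\gamma\sba}+h_{\alpha\sba}h_{\gamma\bba})$ contributes $(n+1)|\xi|^2h_{\alpha\bba}$ immediately, so everything hinges on showing that the trace of the remaining terms collapses to $-D^\vr_{\alpha\bba}\log J(\vr)$. For this I would split the tangential trace $h^{\gamma\sba}\widetilde R_{\alpha\bba\gamma\sba}$ into a full ambient trace and a transversal correction by means of $h^{k\lba}=\vr^{k\lba}-|\partial\vr|^2\xi^k\xi^{\lba}$: the full ambient trace is the Kähler--Ricci tensor, $\vr^{k\lba}\widetilde R_{i\jba k\lba}=-\partial_i\partial_\jba\log\det(\vr_{m\bar n})$, whose frame contraction is $-D^\vr_{\alpha\bba}\log\det(\vr_{m\bar n})$, while the transversal correction is a curvature contraction with $\xi$ that I expect to cancel precisely against the traces of the third-order terms surviving from $\II$, these being built from the same third derivatives of $\vr$. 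To finish I would invoke the determinant identity $\det(\psi_{j\kba})=J(\vr)$ and its matrix-determinant-lemma consequence $J(\vr)=|\partial\vr|^2\det(\vr_{m\bar n})$, valid because $r=|\partial\vr|^{-2}$ in the Kähler case, so that $\log\det(\vr_{m\bar n})=\log J(\vr)-\log|\partial\vr|^2$; the residual $D^\vr_{\alpha\bba}\log|\partial\vr|^2$ is what I expect the transversal $\xi$-terms to cancel. Keeping straight the Levi forms against the holomorphic Hessians $h_{\alpha\gamma}$, $h_{\bba\sba}$ and the various $\xi$-contractions throughout this bookkeeping is where the genuine effort lies.

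Finally, to drop the assumption that $\vr_{j\kba}$ is nondegenerate, I would argue exactly as in the proof of \cref{cor:torsion}: replace $\vr$ by $\widetilde\vr=\vr+C\vr^2$ with $C$ large, so that $\widetilde\vr_{j\kba}$ is nondegenerate while $\widetilde\vr$ defines the same $M$ and induces the same $\theta=-i\partial\vr$. Since $h_{\alpha\bba}$, $\xi^k$, $J(\vr)$, the operators $D^\vr$, $\mathcal R^\vr$ and $\log J(\vr)$ are all unchanged on $M$ under this modification (as in the torsion computation), the right-hand sides of \cref{e:pc} and \cref{e:cm} are unaffected, and the general Levi-nondegenerate case follows.
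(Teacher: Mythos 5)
Your proposal reproduces the paper's architecture essentially verbatim: the ambient Chern curvature from the potential $\vr$, the Gauß equation \cref{e:gauss} with the explicit second fundamental form \cref{e:sff}, the substitution $\vr^{j\kba}=h^{j\kba}+|\partial\vr|^2\xi^j\xi^{\kba}$ to arrive at \cref{e:pc}, and then the trace-free decomposition combined with the Ricci identity $R_{\alpha\bba}=-D^{\vr}_{\alpha\bba}\log J(\vr)+(n+1)|\xi|^2h_{\alpha\bba}$ to arrive at \cref{e:cm}. The one structural deviation is that the paper simply quotes this Ricci identity from Li--Luk \cite{li--luk} (it appears as \cref{e:lilukRicci}), whereas you propose to re-derive it by tracing \cref{e:pc}. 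That is a legitimate route, but your sketch of it has a genuine soft spot: you pass from $\log\det(\vr_{m\bar n})$ to $\log J(\vr)-\log|\partial\vr|^2$ and then apply $D^{\vr}_{\alpha\bba}$. The identity $J(\vr)=\det(\vr_{m\bar n})\,|\partial\vr|^2$ holds only \emph{on} $M$, while $D^{\vr}_{\alpha\bba}$ is a contraction of the full complex Hessian with tangential vectors and is not determined by the restriction of its argument to $M$: if $\varphi=\psi+\vr g$, then $D^{\vr}_{\alpha\bba}\varphi=D^{\vr}_{\alpha\bba}\psi+g\,h_{\alpha\bba}$ on $M$. Using the off-$M$ identity $J(\vr)=\det(\vr_{m\bar n})(|\partial\vr|^2-\vr)$ one finds the correction is exactly $-|\xi|^2h_{\alpha\bba}$, which must be tracked; and the asserted cancellation of the transversal $\xi\xi$-curvature correction against the traces of the third-order terms is claimed but not checked. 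Citing \cite{li--luk} is the economical repair.

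The second point that needs fixing is the reduction to the case of degenerate complex Hessian. Your justification --- that $h_{\alpha\bba}$, $\xi^k$, $J(\vr)$, $D^{\vr}$, $\mathcal{R}^{\vr}$ and $\log J(\vr)$ are ``all unchanged on $M$'' under $\vr\mapsto\widetilde{\vr}=\vr+C\vr^2$ --- is false term by term: the paper's own proof of \cref{cor:torsion} records $|\widetilde{\xi}|^2=|\xi|^2+2C$ and $D^{\widetilde{\vr}}_{\alpha\beta}(\widetilde{\vr}_{\kba})=D^{\vr}_{\alpha\beta}(\vr_{\kba})+2C\vr_{\kba}h_{\alpha\beta}$, and similarly $\mathcal{R}^{\widetilde{\vr}}_{\alpha\bba\gamma\sba}(\widetilde{\vr})$ acquires terms such as $2C(h_{\alpha\bba}h_{\gamma\sba}+h_{\alpha\sba}h_{\gamma\bba}+h_{\alpha\gamma}h_{\bba\sba})$. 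Only the total combination on the right-hand sides of \cref{e:pc} and \cref{e:cm} is invariant, and the clean way to see this (the paper's way) is indirect: for all small $C>0$ the formulas hold for $\widetilde{\vr}$, the left-hand sides depend only on $\theta=-i\partial\widetilde{\vr}=-i\partial\vr$ and hence not on $C$, the right-hand sides are rational in the derivatives of $\vr$ and in $C$ with denominators powers of $J$, and one lets $C\to 0$. Neither issue changes the overall strategy, but both steps, as written, would fail and need to be reworked along these lines.
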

\begin{proof}
We first assume that $\vr_{j\kba}$ is invertible. Recall that the curvature of the K\"ahler metric $i\partial\bar{\partial}\vr$ is given by $\widetilde{R}_{\jba k \lba m} = -\varrho_{m \lba k \jba} + \varrho^{\bar{p} q}\vr_{mk\bar{p}} \vr_{\lba \jba q}$ in the coordinates $z_j, j = 1,2,\dots , n+1$ (see \cite[Proposition~6.2]{morrow2006complex}, but mind our opposite sign convention for the curvature operator on K\"ahler manifolds). 
Then
\begin{align}
	\widetilde{R}(Z_{\alpha} , Z_{\bba} , Z_{\gamma} , Z_{\sba})
	=
	-\mathcal{R}^{\vr}_{\alpha\bba\gamma\sba}(\vr) + \vr^{j\kba} D^{\vr}_{\alpha\gamma}(\vr_{\kba}) D^{\vr}_{\bba\sba}(\vr_{j}).
\end{align}
Plugging this into the Gauß equation \cref{e:gauss}, we have that
\begin{align} 
R_{\alpha\bba\gamma\sba}
& =
- \mathcal{R}^{\vr}_{\alpha\bba\gamma\sba}(\vr) + \vr^{j\kba} D^{\vr}_{\alpha\gamma}(\vr_{\kba}) D^{\vr}_{\bba\sba}(\vr_{j})
+
|\xi|^2 \left(h_{\alpha\bba} h _{\gamma\sba} + h_{\alpha\sba} h _{\gamma\bba}\right) \notag \\
& \quad - |\xi|^2\left(\vr^{\bar{p}} D^{\vr}_{\alpha\gamma}(\vr_{\bar{p}}) - h_{\alpha\gamma}\right) \left(\vr^{q} D^{\vr}_{\bba\sba}(\vr_{q}) - h_{\bba\sba}\right) \notag \\
& =
- \mathcal{R}^{\vr}_{\alpha\bba\gamma\sba}(\vr)
+ h^{j\kba} D^{\vr}_{\alpha\gamma}(\vr_{\kba}) D^{\vr}_{\bba\sba}(\vr_{j})
+
|\xi|^2 \left(h_{\alpha\bba} h _{\gamma\sba} + h_{\alpha\sba} h _{\gamma\bba}\right) \notag \\
& \quad 
+
h_{\bba\sba}\xi^{\kba} D^{\vr}_{\alpha\gamma}(\vr_{\kba}) + h_{\alpha\gamma} \xi^{j} D^{\vr}_{\bba\sba}(\vr_{j}) - |\xi|^2 h_{\alpha\gamma} h_{\bba\sba},
\end{align}
which shows \cref{e:pc}, as desired.

The Chern--Moser--Weyl tensor can be obtained by taking the complete tracefree part of $R_{\alpha\bba\gamma\sba}$. To this end, we use the formula for the Ricci tensor obtained by Li--Luk \cite{li--luk}, namely,
\begin{equation}
\label{e:lilukRicci}
R_{\alpha\bba} = - D^{\vr}_{\alpha\bba} \log J(\vr) + (n+1)|\xi|^2 h_{\alpha\bba}.
\end{equation}
This and \cref{e:pc} immediately implies \cref{e:cm}, using Webster's formula \cite[(3.8)]{webster1978pseudo}.

In order to remove the assumption that $\vr_{j\kba}$ is invertible, we use an idea taken from \cite{li--luk} as before. We denote by $\phi^{\kba j}$ the adjugate matrix of $\vr_{j\kba}$, then as in \cite{li--son} we have,
\begin{equation} 
	\xi^{k} = \frac{\phi^{\kba j}\vr_{j}}{J(\vr)},
	\quad
	|\xi|^2 = \frac{\det (\vr_{j\kba})}{J(\vr)}.
\end{equation} 
Thus, the right-hand sides of \cref{e:pc,e:cm} are rational expressions in terms of derivatives of $\vr$ with denominators are some powers of $J(\vr)$. We replace $\vr$ by $\widetilde{\vr}: = \vr + C\vr^2$ for some constant $C>0$. By a direct calculation, $\det(\widetilde{\vr}_{j\kba}) = \det(\vr_{j\kba}) + 2CJ(\vr)$ on $M$. Therefore, $\widetilde{\vr}_{j\kba}$ is invertible on $M$ for every $C>0$ small enough since $J(\vr) \ne 0$. Observe that $\theta = -i\partial\vr = -i \partial \widetilde{\vr}$ on $M$. Therefore, the right-hand sides of \cref{e:pc} and \cref{e:cm} do not change when $C > 0$ varies. Passing through the limit when $C \to 0$, which is allowed since $J(\vr) \ne 0$ and $J(\widetilde{\vr}) \ne 0$ for all $C>0$ small enough, we conclude the proof.
\end{proof}

It was proved by Fefferman \cite{fefferman1976} that for each nondegenerate real hypersurface $M$, there exists a defining function $\vr_0$ such that $J(\vr_0) = 1 + O(\vr_0^{n+2})$. 
In this case, the formula for the Chern--Moser--Weyl tensor is greatly simplified. Indeed, we have the following
\begin{cor}\label{cor:chern--moser}
	Let $M$ be a nondegenerate real hypersurface defined by $\vr = 0$. Assume that $\vr_{w} \ne 0 $ and $J(\vr) = 1 + O(\vr^3)$, then the Chern--Moser--Weyl tensor of $(M,\theta: = -i\partial \vr)$ is given in the frame $Z_{\alpha}: = \partial_{\alpha} - (\vr_{\alpha}/\vr_w)\partial_w$ by
	\begin{align}\label{e:cmf}
	S_{\alpha\bba\gamma\sba} & =
	-\mathcal{R}^{\vr}_{\alpha\bba\gamma\sba}(\vr) + h^{j\kba} D^{\vr}_{\alpha\gamma}(\vr_{\kba}) D^{\vr}_{\bba\sba}(\vr_{j}) \notag \\
	& \quad +
	h_{\bba\sba}\xi^{\kba} D^{\vr}_{\alpha\gamma}(\vr_{\kba}) + h_{\alpha\gamma} \xi^{j} D^{\vr}_{\bba\sba}(\vr_{j}) - |\xi|^2 h_{\alpha\gamma} h_{\bba\sba}.
	\end{align}
\end{cor}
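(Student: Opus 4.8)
The plan is to deduce the result directly from the general formula \cref{e:cm} of \cref{thm:chern--moser-general} by showing that the hypothesis $J(\vr) = 1 + O(\vr^3)$ forces the last two lines of \cref{e:cm}---precisely the terms carrying a factor of the form $D^{\vr}_{\cdot\,\cdot}\log J(\vr)$---to vanish on $M$. First I would check that the hypotheses of \cref{thm:chern--moser-general} are in force: since $\vr = 0$ on $M$, the assumption gives $J(\vr)|_M = 1 \ne 0$, so $J(\vr) \ne 0$ in a neighborhood of $M$ and the general formula is applicable. It then suffices to verify that $D^{\vr}_{\alpha\bba}\bigl(\log J(\vr)\bigr) = 0$ on $M$ for every pair $\alpha,\bba$, since the coefficients $h_{\gamma\sba}$, $h_{\alpha\bba}$, $h^{\epsilon\bar{\delta}}$, etc., multiplying these operators in \cref{e:cm} are finite on $M$.

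The key observation is that $\log J(\vr)$ vanishes to third order along $M$. Writing $J(\vr) = 1 + \vr^3\psi$ for a smooth function $\psi$ near $M$ and expanding $\log(1+\vr^3\psi) = \vr^3\psi + O(\vr^6)$, I obtain $\log J(\vr) = \vr^3\,\widetilde{\psi}$ for some smooth $\widetilde{\psi}$. (Note that the Fefferman normalization $J(\vr_0) = 1 + O(\vr_0^{n+2})$ is more than enough here, as $n+2 \geq 4 > 3$; vanishing to order three is all that the second-order operator $D^{\vr}_{\alpha\bba}$ can detect.)

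Next I would invoke the identity behind \cref{e:levimt}, namely $D^{\vr}_{\alpha\bba}(\varphi) = \varphi_{Z\bar{Z}}(Z_{\alpha},Z_{\bba}) = Z_{\alpha}^{j}\,\overline{Z_{\beta}^{k}}\,\partial_{j}\partial_{\bar{k}}\varphi$, which exhibits $D^{\vr}_{\alpha\bba}$ as the contraction of the full complex Hessian $\partial_{j}\partial_{\bar{k}}\varphi$ with the frame vectors $Z_{\alpha}$ and $Z_{\bba}$. Applying this to $\varphi = \log J(\vr) = \vr^3\widetilde{\psi}$, each entry $\partial_{j}\partial_{\bar{k}}(\vr^3\widetilde{\psi})$ of the complex Hessian is a sum of terms every one of which still carries at least one factor of $\vr$ (the lowest-order one being $6\,\vr\,\vr_{j}\vr_{\bar{k}}\widetilde{\psi}$, the remaining ones carrying $\vr^2$ or $\vr^3$). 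Hence $\partial_{j}\partial_{\bar{k}}\log J(\vr) = 0$ on $M$, and consequently $D^{\vr}_{\alpha\bba}\log J(\vr) = 0$ as well as $h^{\epsilon\bar{\delta}}D^{\vr}_{\epsilon\bar{\delta}}\log J(\vr) = 0$ on $M$.

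Substituting these vanishings into \cref{e:cm} deletes the two lines containing $\log J(\vr)$ and leaves exactly \cref{e:cmf}. The only point demanding care---routine rather than deep---is the bookkeeping that each mixed second derivative of a function vanishing to order three retains a factor of $\vr$; I do not expect any genuine obstacle, the corollary being a direct specialization of \cref{thm:chern--moser-general} once the Hessian of $\log J(\vr)$ is seen to vanish on $M$.
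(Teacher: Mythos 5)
Your proposal is correct and follows essentially the same route as the paper: the corollary is obtained by specializing \cref{thm:chern--moser-general} and observing that $D^{\vr}_{\alpha\bba}\log J(\vr)$ vanishes on $M$ because $\log J(\vr)=\vr^3\widetilde{\psi}$ and $D^{\vr}_{\alpha\bba}$ only contracts the second-order complex Hessian, every entry of which retains a factor of $\vr$. The paper simply states this vanishing as ``clear''; you have supplied the short verification it omits.
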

\begin{proof}
		If $J(\vr) = 1 + O(\vr^3)$ then clearly $D^{\vr}_{\alpha\bba} \log J(\vr) = D^{\vr}_{\alpha\beta} \log J(\vr) = 0$ on $M$ and the conclusion follows immediately from \cref{thm:chern--moser-general}.
\end{proof}
\begin{proof}[Proof of \cref{thm:chern-moser}]
	Since $\vr_{j\kba} = \delta_{jk}$ we immediately obtain that all the terms involving 3rd order derivatives in \eqref{e:pc} vanish, which proves \eqref{e:phc}, since $|\xi|^2 = |\partial \vr|^{-2}$. To show \eqref{e:chern-moser}, we use \eqref{e:phc} and compute,
	\begin{equation}
	R_{\gamma \bar \sigma} = h^{\alpha \bar \beta} R_{\alpha \bar \beta \gamma \bar \sigma} = \frac{1}{|\partial \vr|^2}\left((n+1) h_{\gamma \bar \sigma} - h_{\gamma \epsilon} h^{\epsilon \bar \delta} \right).
	\end{equation}
Together with \eqref{e:lilukRicci} we obtain
\begin{equation}
D^{\vr}_{\alpha \bar \beta} \log J(\vr) = \frac{h_{\alpha \epsilon}h^{\epsilon}_{\bar \beta}}{|\partial \vr|^2},
\end{equation}
which proves \eqref{e:chern-moser}. 
\end{proof}
Let us conclude this section by discussing the relation between our formula \cref{e:chern-moser} and the Chern--Moser normal form in \cite{chern1974real}. It is well-known (Eq. (6.20) in \cite{chern1974real}) that if a defining function is given in the normal form, then the Chern--Moser--Weyl tensor at the centered point can be identified with the coefficient of the 4th order term. This relation was further elaborated in \cite{huang2009monotonicity}, see also \cite{huang2017chern}. First, we suppose that the defining function takes the following form:
\begin{equation}\label{e:partialnormalform}
	\vr= \Im(w) - \|z\|^2+ F_4(z,\zba) + R(z,\zba, \Re(w)),
\end{equation}
where the fourth order term is 
\begin{equation} 
	F_4(z,\zba) = \frac{1}{4}\sum c_{\alpha\bar
		{\beta}\gamma \bar{\delta}}z_{\alpha} {\bar z_\beta}z_{\gamma}{\bar
		z_\delta},
\end{equation} 
and the remainder term $R(z,\zba, \Re(w))$ has ``weight'' and degree at least five; see \cite{chern1974real} for details. Since $F_4$ is real-valued, the coefficients in $F_4$ can be arranged to satisfy 
\begin{equation} 
	c_{\alpha\bar {\beta}\gamma \bar{\delta}}
	=
	c_{\gamma \bar {\beta}\alpha\bar{\delta}}
	=
	c_{\gamma\bar{\delta}\alpha\bar {\beta}},\ \quad
	\overline {c_{\alpha\bar {\beta}\gamma\bar{\delta}}}
	=
	c_{\beta\bar{\alpha}\delta\bar{\gamma}}.
\end{equation} 
Since $\rho_{\alpha}(0) = 0$ for $\alpha = 1,2,\dots , n$, it is readily seen that
\begin{equation} 
	\mathcal{R}^{\varrho}_{\alpha\bba\gamma\bar{\delta}}(\vr) \bigl|_0\, = \vr_{\alpha\bba\gamma\bar{\delta}}(0) = c_{\alpha\bar {\beta}\gamma \bar{\delta}}.
\end{equation} 
Moreover, the terms involving derivatives of order $\leq 3$ are
\begin{align}
	h_{\alpha\bba}\bigl|_0 = \delta_{\alpha\bba}, \qquad 
	h_{\alpha\beta}\bigl|_0 = 0, \qquad 
	D^{\vr}_{\alpha\gamma}(\vr_{\kba})\bigl|_0 = 0.
\end{align} 
Futhermore, since $\det [\vr_{j\kba}] = 0$ at the origin, we obtain that $|\xi|^2 = 0$ at the origin. Thus, the full curvature tensor and the torsion tensor at the origin are
\begin{align} 
	R_{\alpha\bar {\beta}\gamma \bar{\delta}}\, \bigl|_0\, 
	& =
	R\left(\partial_{\alpha}|_0, \partial_{\bba}|_0, \partial_{\gamma}|_0, \partial_{\bar{\delta}}|_0\right)\,
	=
	c_{\alpha\bar {\beta}\gamma \bar{\delta}},\\
	A_{\alpha\beta}|_0 & = \langle \tau(\partial_{\alpha}|_0) , \partial_{\beta}\,\bigl|_0\rangle = 0.
\end{align} 
If the defining function is normalized such that the coefficients $c_{\alpha\bar {\beta}\gamma \bar{\delta}}$ are completely tracefree (which is the case for the Chern--Moser normal form), then 
\begin{equation} 
	R_{\alpha\bba}\, \bigl|_0 = \sum_{\gamma} c_{\alpha\bar {\beta}\gamma \bar{\gamma}} = 0,
	\quad 
	R|_0 = 0.
\end{equation} 
Consequently, in this case
\begin{equation} 
	S_{\alpha\bar {\beta}\gamma \bar{\delta}}\, \bigl|_0\,
	=
	R_{\alpha\bar {\beta}\gamma \bar{\delta}}\, \bigl|_0\, 
	= c_{\alpha\bar {\beta}\gamma \bar{\delta}}.
\end{equation}
This is Eq. (6.20) in \cite{chern1974real} modulo a sign convention.

\section{The CR invariant one-form $X_{\alpha}$ on the real ellipsoids of revolution}\label{sec:Xalpha}
Let $M\subset \mathbb{C}^{n+1}$ be a strictly pseudoconvex CR manifold defined by $\varrho = 0$. There exists a unique pseudohermitian structure $\theta$ on $M$ which is volume-normalized with respect to $\zeta:=\iota^{\ast}\left(dz_1\wedge dz_2\wedge \cdots \wedge dz_{n+1}\right)$. Indeed, it follows from \cite{farris1986intrinsic} that if $J(\vr) = 1$ on $M$, then $\iota^{\ast}(i\bar{\partial}\vr)$ is the volume-normalized structure with respect to $\zeta$. In general, for an arbitrary defining function $\vr$, if we define $\vr_1= J(\vr)^{-1/(n+2)}\vr$, then $\vr_1$ is a defining function for $M$ which satisfies $J(\vr_1) = 1$ on $M$. Thus, $\widetilde{\theta} := J(\vr)^{-1/(n+2)}i\bar{\partial}\vr$
does not depend on the choice of the defining function and is volume-normalized. Consequently, there is a universal partial differential operator $\mathcal{P}$ such that the CR invariant one-form $X_{\alpha}$ defined by \cref{e:Xalphadef} is represented in the volume-normalized scale $\widetilde{\theta}$ by $\mathcal{P}(\vr)\bigl|_M$ for an arbitrary defining function~$\vr$. \cref{thm:xalpha}, which follows from the proposition below, implies that $\mathcal{P}$ is nontrivial and hence the nonvanishing of $X_{\alpha}$ is a ``generic'' phenomenon. 

In the rest of this section, we prove the following

\begin{prop}\label{p:nonTrivialX}
Let $E(a)$ be given as in \cref{thm:xalpha} and $\widetilde{\theta}$ be the unique pseudohermitian structure on $E(a)$ that is volume-normalized with respect to the section $\zeta: = dz_1 \wedge dz_2 \wedge dw\bigl|_{E(a)}$. Then,
	\begin{align}\label{e:xalpha1}
	X_{\alpha}(\widetilde{\theta})
	 =
	\frac{a^4\|z\|^6(|\partial\vr|^2\vr_w + 9a\|z\|^2\vr_{\wba})}{96|\partial\vr|^{17}}
	 (a\|z\|^2\vr_w + 4|\partial\vr|^2\vr_{\wba})\zba_{\alpha}dz_{\alpha} + a\|z\|^2(\vr^2_w - 4|\partial\vr|^2)dw.
	\end{align}
	Hence, $X_{\alpha}(\widetilde{\theta})$ is not identically zero on $E(a)$ for $a\ne 0$.
\end{prop}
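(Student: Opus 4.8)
The plan is to evaluate the quantities in \eqref{e:Xalphadef} explicitly, starting from \cref{thm:chern-moser}. The decisive point is that $\vr = |z_1|^2+|z_2|^2+|w|^2+\Re(aw^2)-1$ has constant complex Hessian $\vr_{j\kba}=\delta_{jk}$, so \cref{thm:chern-moser} applies directly in the scale $\theta = i\bar\partial\vr$. First I would record the elementary data: $\vr_\alpha = \zba_\alpha$ and $\vr_w = \wba+aw$, the norm $|\partial\vr|^2 = \|z\|^2 + |\vr_w|^2$ in the flat metric $\vr_{j\kba}=\delta_{jk}$, the transverse curvature $|\xi|^2 = |\partial\vr|^{-2}$ with $\xi^\gamma = \vr^\gamma/|\partial\vr|^2$, and the inverse Levi matrix $h^{\bba\alpha} = \delta_{\alpha\beta} - \vr^{\bba}\vr^\alpha/|\partial\vr|^2$ from \eqref{e:inverseLevi}. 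A short computation with the operators $D^{\vr}_{\alpha\bba}$ and $D^{\vr}_{\alpha\beta}$ then gives $h_{\alpha\bba} = \delta_{\alpha\beta} + \zba_\alpha z_\beta/|\vr_w|^2$ and, crucially, $h_{\alpha\beta} = a\,\zba_\alpha\zba_\beta/\vr_w^2$, which is of rank one. Substituting these into \eqref{e:chern-moser} collapses almost every contraction, since all traces of powers of the rank-one tensor $h_{\alpha\beta}$ reduce to scalar functions of $\|z\|^2$, $\vr_w$ and $|\partial\vr|^2$; the outcome is a closed expression for $S_{\alpha\bba\gamma\sba}$ on $E(a)$ built from the rank-one blocks $\zba_\alpha\zba_\gamma$ and $z_\beta z_\sigma$, the Levi form, and explicit scalar factors.

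Next I would assemble the two covariant-derivative terms in \eqref{e:Xalphadef}, namely $\nabla_\alpha|S_{\epsilon\bba\gamma\sba}|^2$ and the contracted second derivative $S^{\bba\gamma\sba}{}_{\bar\epsilon,}{}^{\bar\epsilon}$. The Tanaka--Webster connection is available explicitly: its forms $\omega_\beta{}^\gamma$ are given by \eqref{e:cf} and require only $\xi^\gamma$, $\xi_\beta = h_{\beta\sba}\xi^{\sba}$, and the frame derivatives $Z_\alpha h_{\beta\bar\mu}$, all of which are rational in the coordinates for our $\vr$. Because $S$ is a product of the rank-one tensors with explicit scalars, each covariant derivative reduces to differentiating those scalars along $Z_\alpha=\partial_\alpha-(\vr_\alpha/\vr_w)\partial_w$ plus connection terms acting on $\zba_\alpha\zba_\gamma$ and on the raising and lowering by $h^{\bba\alpha}$, so the resulting traces remain finite and explicit.

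Finally, \cref{thm:chern-moser} produces $S$ in the scale $\theta = i\bar\partial\vr$, whereas the proposition requires $X_\alpha$ in the volume-normalized scale. Here $J(\vr)=|\partial\vr|^2$ on $E(a)$ (a direct evaluation of \eqref{e:lfdetdef} using $\vr_{j\kba}=\delta_{jk}$ and $\vr=0$), so $\widetilde\theta = |\partial\vr|^{-1/2}\theta = e^{\Upsilon}\theta$ with $\Upsilon = -\tfrac12\log|\partial\vr|$. Since $X_\alpha$ is a CR invariant weighted one-form, I would pass to $\widetilde\theta$ by applying its CR weight (equivalently, by re-expressing the covariant derivatives through the Tanaka--Webster connection of $\widetilde\theta$), then rewrite the admissible coframe $\theta^\alpha = dz^\alpha - i\xi^\alpha\theta$ in terms of the ambient differentials $dz_\alpha, dw$, and collect terms to reach \eqref{e:xalpha1}. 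The nonvanishing for $a\ne 0$ is then immediate from the explicit formula: the scalar prefactor is a nonzero rational function, and the $dw$-component carries the factor $\vr_w^2-4|\partial\vr|^2$, which cannot vanish identically since $|\partial\vr|^2=\|z\|^2+|\vr_w|^2$ is real and positive whereas $\vr_w^2=(\wba+aw)^2$ is not real in general.

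The principal obstacle is twofold. The laborious part is the covariant-derivative bookkeeping: even after the rank-one collapse, $S^{\bba\gamma\sba}{}_{\bar\epsilon,}{}^{\bar\epsilon}$ is a contracted second covariant derivative, so one must track \eqref{e:cf} through two differentiations together with the raising operator $h^{\bba\alpha}=\vr^{\bba\alpha}-\vr^{\bba}\vr^\alpha/|\partial\vr|^2$. The more delicate point is the change of scale to $\widetilde\theta$: one must pin down the CR weight of $X_\alpha$ and confirm that no inhomogeneous terms in $\Upsilon_\alpha$ survive, as is guaranteed by the CR invariance asserted by Case and Gover, and then correctly convert the coframe, since it is precisely the compatibility of the various powers of $|\partial\vr|$ that produces the factor $|\partial\vr|^{17}$ and the clean form \eqref{e:xalpha1}.
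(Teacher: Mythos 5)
Your overall strategy coincides with the paper's: exploit $\vr_{j\kba}=\delta_{jk}$ to apply \cref{thm:chern-moser} directly in the scale $\theta=i\bar{\partial}\vr$, use the rank-one structure of $h_{\alpha\beta}=a\zba_{\alpha}\zba_{\beta}/\vr_w^2$ to collapse every contraction, identify $J(\vr)=|\partial\vr|^2$ on $E(a)$ (hence $\widetilde{\theta}=|\partial\vr|^{-1/2}\theta$), and compute $|S|^2$ and $\nabla_{\alpha}|S|^2$ explicitly. All of the elementary data you record ($h_{\alpha\bba}$, $h_{\alpha\beta}$, $h^{\bba\alpha}$, $|\xi|^2=|\partial\vr|^{-2}$) matches the paper.

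Where you diverge is in the treatment of the term $S^{\bba\gamma\sba}{}_{\bar{\epsilon},}{}^{\bar{\epsilon}}$, and this is the part of your plan that deserves scrutiny. First, a small but telling slip: this is a single contracted covariant derivative of $S$ (a divergence), not a ``contracted second covariant derivative'' as you describe it twice. Second, and more substantively, you propose to compute it by brute-force covariant differentiation of the four-tensor $S$ in the non-normalized scale and to pass to $\widetilde{\theta}$ at the very end by ``applying the CR weight'' of $X_{\alpha}$. The paper instead transforms $S$ and the torsion $A_{\alpha\beta}$ (the latter from \cref{cor:torsion}) to the volume-normalized scale first, notes that $\widetilde{\theta}$ is pseudo-Einstein, and then uses the identity \cref{e:sv} together with \cref{lem:VTensor}: on a pseudo-Einstein manifold the Cotton-type tensor $V_{\alpha\bba\gamma}$ differs from $A_{\alpha\gamma,\bba}$ only by pure-trace terms, which vanish upon contraction with the trace-free $S$. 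This reduces the divergence of the four-tensor $S$ to one covariant derivative of the two-tensor $\widetilde{A}_{\alpha\beta}$, which is precisely what makes the computation tractable by hand. Your route is not logically wrong, but it forgoes this reduction and in addition rests on two points you would still have to supply: the precise density weight of $X_{\alpha}$, and a justification that \cref{e:Xalphadef} transforms homogeneously under the non-pluriharmonic rescaling $\Upsilon=-\tfrac12\log|\partial\vr|$ (the transformation law \cref{e:2:8.17} is stated in the pseudo-Einstein setting, so invariance under arbitrary rescalings needs the full Case--Gover result, cited precisely). The paper sidesteps both issues by computing every ingredient natively in $\widetilde{\theta}$; if you keep your order of operations, you must close these two gaps explicitly.
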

\begin{proof}
	The proof is a matter of calculations, using the procedure described in the previous section and in particular \cref{thm:chern-moser}. To simplify notations, put $q(w) = |w|^2 + \Re (aw^2) - 1$ so that $\varrho = \|z\|^2 + q(w)$, with $\|z\|^2 = |z_1|^2 + \cdots + |z_n|^2$. On $E(a)$, $q(w) = -\|z\|^2$. Since $\vr_{j\kba} = \delta_{jk}$, we have $|\partial \varrho|^2 := |\partial \varrho|^{2}_{\omega} = \|z\|^2 + |\varrho_w|^2$. As proved in \cite{son2019semi}, $|H|^2 = |\partial\vr|^{-2}$ is the transverse curvature of $\vr$. We have by \eqref{e:levimt}, \eqref{e:dab} and \eqref{e:inverseLevi}
	\begin{equation}
		h_{\alpha\bba} = \delta_{\alpha \beta} + \frac{\zba_\alpha z_\beta}{|\vr_w|^2}, \qquad h_{\alpha\beta}
		= D_{\alpha\beta}^{\vr}(\vr)
		=
		\frac{a\zba_{\alpha}\zba_{\beta}}{\varrho_w^2}, \qquad h^{\alpha \bba} = \delta_{\alpha \beta} - \frac{z_\alpha \zba_\beta}{|\partial \vr|^2},
	\end{equation}
such that 
\begin{equation}
h^{\mu \nu} = h_{\bba}^{\mu} h^{\nu \bba} = h_{\bba \sba} h^{\mu \sba} h^{\nu \bba} = \frac{a z_\nu z_\mu \vr_w^2}{|\partial \vr|^4}.
\end{equation}
	Then, \cref{thm:chern-moser} gives
	\begin{align}
		S_{\beta\aba\rho\sba}
		& = 
		- \frac{a^2\zba_{\beta} z_{\alpha} \zba_{\rho} z_{\sigma}}{|\partial\varrho|^2|\varrho_w|^4} 
		- \frac{a^2\|z\|^4\left(h_{\beta\aba}h_{\rho\sba} + h_{\rho\aba}h_{\beta\sba}\right)}{(n+1)(n+2)|\partial\varrho|^6} \notag \\
		& \qquad + \frac{a^2\|z\|^2\left(h_{\beta\aba} \zba_{\rho}z_{\sigma} + h_{\rho\aba}\zba_{\beta} z_{\sigma} + h_{\beta\sba} \zba_{\rho} z_{\alpha} + h_{\rho\sba} \zba_{\beta}z_{\alpha} \right) }{(n+2)|\varrho_w|^2 |\partial \varrho|^4}.
	\end{align}
	Raising the indices, using,
	\begin{equation}
	h^{\alpha \bba} z_{\beta} = \frac{z_{\alpha} |\vr_w|^2}{|\partial \vr|^2},
	\end{equation}
	we obtain,
	\begin{align}
		S_{\alpha}{}^{\mu\bar{\nu}\gamma}
		& =
		-\frac{a^2|\vr_w|^2 \zba_{\alpha}z_{\mu} \zba_{\nu} z_{\gamma}}{|\partial \vr|^8}
		-
		\frac{a^2\|z\|^4\left(h^{\gamma\bar{\nu}}\delta_{\alpha\mu} + h^{\mu\bar{\nu}}\delta_{\alpha\gamma}\right)}{(n+1)(n+2)|\partial\vr|^6} \notag \\
		& \quad +
		\frac{a^2\|z\|^2\left(\delta_{\alpha\mu}\zba_{\nu}z_{\gamma}|\vr_w|^2/|\partial\vr|^2 + h^{\mu\bar{\nu}}\zba_{\alpha} z_{\gamma} + \delta_{\alpha\gamma}z_{\mu}\zba_{\nu}|\vr_w|^2/|\partial\vr|^2 + h^{\gamma\bar{\nu}}z_{\mu}\zba_{\alpha} \right)}{(n+2)|\partial\vr|^6},
	\end{align}
	and
	\begin{align}
		S^{\beta\aba\rho\sba}
		& =
		- \frac{a^2\|z\|^4\left(h^{\beta\aba}h^{\rho\sba} + h^{\rho\aba}h^{\beta\sba}\right)}{(n+1)(n+2)|\partial\varrho|^6} - \frac{a^2 |\varrho_w|^4 z_{\beta} \zba_{\alpha} z_{\rho} \zba_{\sigma}}{|\partial\varrho|^{10}} \notag \\
		& \qquad +
		\frac{a^2\|z\|^2|\varrho_w|^2\left(h^{\beta\aba} z_{\rho}\zba_{\sigma} + h^{\rho\aba} z_{\beta} \zba_{\sigma} + h^{\beta\sba} z_{\rho} \zba_{\alpha} + h^{\rho\sba} z_{\beta} \zba_{\alpha} \right) }{(n+2) |\partial\varrho|^8}.
	\end{align}
	From this, and since, 
\begin{equation}
h_{\alpha \bba} z_\alpha \zba_{\beta} = \frac{\|z\|^2 |\partial \vr|^2}{|\vr_w|^2},
\end{equation}	
		we can calculate the norm of the Chern--Moser--Weyl tensor:
	\begin{align}
		|S|^2 
		=
		S^{\beta\aba\rho\sba} S_{\beta\aba\rho\sba}
		= \frac{n(n-1)}{(n+1)(n+2)}\frac{a^4\|z\|^8}{|\partial \varrho|^{12}}.
	\end{align}
	We point out that we have used the completely tracefree property of the Chern--Moser--Weyl tensor to simplify our computations.
	
	To determine the unique volume-normalized pseudohermitian structure on $E(a)$ with respect to $\zeta: = \iota^{\ast}dz$, observe that the Levi-Fefferman determinant satisfies (see \cite[Lemma 2.2]{li--luk} and its proof),
	\begin{equation}
		J(\varrho) = \det \left[\varrho_{j\kba}\right]( -\varrho + |\partial\varrho|^2),
	\end{equation}
	such that $J(\varrho) = |\partial\varrho|^2$ on $E(a)$. If we put
	\begin{equation}
		u = -\frac{1}{n+2}\log J(\varrho) = -\frac{1}{n+2} \log |\partial\varrho|^2,
	\end{equation}
	and 
	\begin{equation}
		\widetilde{\theta} = e^u \theta,
	\end{equation}
	then by \cite{farris1986intrinsic}, $\widetilde{\theta}$ is volume-normalized with respect to $\zeta$ and hence is pseudo-Einstein by \cite{lee1988pseudo}. 
	
	The pseudohermitian invariants of $\widetilde{\theta}$ will be indicated with a tilde.	It is well-known that the Chern--Moser--Weyl tensor transforms as follows (\cite{webster1978pseudo}): 
	\begin{equation}
		\widetilde{S}_{\beta\aba\rho\sba} = e^u {S}_{\beta\aba\rho\sba}, 
		\quad 
		\widetilde{S}_{\alpha}{}^{\mu\bar{\nu}\gamma}
		= e^{-2u} S_{\alpha}{}^{\mu\bar{\nu}\gamma},
	\end{equation}
	and
	\begin{equation}\label{e:cmnorm}
		|\widetilde{S}|^2_{\widetilde{\theta}} = e^{-2u} |S|^2_{\theta}
		=
		\frac{n(n-1)a^4}{(n+1)(n+2)}\|z\|^8|\partial \varrho|^{4/(n+2) - 12}.
	\end{equation}
	For $\widetilde{\theta}$, we shall use the same holomorphic frame $Z_{\alpha}$ so that the Levi matrix becomes
	\begin{equation} 
		\widetilde{h}_{\alpha\bba}
		=
		-id\widetilde{\theta}(Z_{\alpha},Z_{\bba}) = e^u h_{\alpha\bba}.
	\end{equation} 
	Differentiating \cref{e:cmnorm} with respect to $Z_{\alpha}$ we have
	\begin{equation} \label{e:NablaS}
		\nabla_{\alpha} |\widetilde S|^2
		=
		\frac{n(n-1)a^4}{(n+1)(n+2)}\frac{\|z\|^6 \zba_{\alpha}}{|\partial\vr|^{14-4/(n+2)}} \left(4|\partial\vr|^2 + a(6-2/(n+2)) \|z\|^2\vr_{\wba}/\vr_w\right).
	\end{equation} 
	To simplify computations, we can use the fact that, on $E(a)$, $\|z\|^2 = -q$ and $|\partial\vr|^{2} = -q + |q_w|^2$. Since $q$ only involves the variable $w$, the differentiation reduces essentially to~$\partial_w$.
	
	To compute the term in $X_{\alpha}$ which involves the torsion, we use the following formula (Eq. (2.16) in \cite{lee1988pseudo})
	\begin{equation}
		-i\widetilde{A}_{\alpha\beta}
		=
		-i{A}_{\alpha\beta} + u_{\alpha,\beta} - u_\alpha u_\beta,
	\end{equation}
	where $A_{\alpha\beta}$ is computed in \cref{cor:torsion}, which in this case is given by
	\begin{equation}
	-iA_{\alpha\beta}
	=
	\frac{a\zba_{\alpha}\zba_{\beta}}{\varrho_w^2 |\partial\varrho|^2}.
	\end{equation}
By a direct calculation, we have
	\begin{equation}
		u_{\alpha} = Z_{\alpha} u
		=
		\frac{a\varrho_{\wba} \zba_{\alpha}}{(n+2)\varrho_w|\partial \varrho|^2}.
	\end{equation}
	Furthermore, by using the formula for the connection forms \cref{e:cf}, we have
	\begin{equation}
		\omega_{\alpha}{}^{\gamma}(Z_{\beta})
		=
		\frac{a\zba_{\alpha}\zba_{\beta}z_{\gamma}}{\varrho_w^2 |\partial\varrho|^2},
	\end{equation}
	and hence
	\begin{align}
		u_{\alpha,\beta}
		& =
		Z_{\beta}u_{\alpha} - \omega_{\alpha}{}^{\gamma}(Z_{\beta}) u_{\gamma} \notag \\
		& = \frac{a\zba_{\alpha}\zba_{\beta}}{(n+2)\varrho_{w}^2|\partial\varrho|^2}\left(\frac{a\varrho_{\wba}^2}{|\partial\varrho|^2} + \frac{a\varrho_{\wba}}{\varrho_w} - 1\right) - \frac{a^2\zba_{\alpha}\zba_{\beta} \|z\|^2 \varrho_{\wba}}{(n+2)\varrho_{w}^3 |\partial\varrho|^4} \notag \\
		& = 
		\frac{a\zba_{\alpha}\zba_{\beta}}{(n+2)\varrho_{w}^2|\partial\varrho|^2}\left(\frac{2a\varrho_{\wba}^2}{|\partial\varrho|^2} - 1\right).
	\end{align}
	Therefore,
	\begin{align}
		-i\widetilde{A}_{\alpha\beta}
		& =
		-i{A}_{\alpha\beta} + u_{\alpha,\beta} - u_\alpha u_\beta \notag \\
		& =
		\frac{1}{n+2}\frac{a\zba_{\alpha}\zba_{\beta}}{\varrho_w^2 |\partial\varrho|^2}\left(n+1+ \frac{a(2n+3)\varrho_{\wba}^2}{(n+2)|\partial\varrho|^2}\right).
	\end{align}
	Differentiating this with respect to $Z_{\bar{\gamma}}$, we have that
	\begin{align}
		-i Z_{\bar{\gamma}}\widetilde{A}_{\alpha\beta}
		=
		\frac{a}{(n+2)\varrho_w^2 |\partial\varrho|^2}\left(Q(\delta_{\alpha\gamma}\zba_{\beta} + \delta_{\beta\gamma}\zba_{\alpha}) + \zba_{\alpha}\zba_{\beta}z_{\gamma}P\right),
	\end{align}
	where
	\begin{equation}
		Q: = n+1+ \frac{a(2n+3)\varrho_{\wba}^2}{(n+2)|\partial\varrho|^2},
	\end{equation}
	and
	\begin{align}
		P 
		& = \left(\frac{2}{|\varrho_w|^2} + \frac{a\varrho_w}{|\partial\varrho|^2 \varrho_{\wba}}\right)Q +\frac{(2n+3)a^2}{n+2} \left(\frac{|\varrho_w|^2}{|\partial\varrho|^4} - \frac{2}{|\partial\varrho|^2} \right) \notag \\
		& = 
		\frac{2(n+1)}{|\varrho_w|^2} + \frac{a(n+1)\varrho_w}{|\partial\varrho|^2 \varrho_{\wba}}
		+ \frac{2a(2n+3)\varrho_{\wba}}{(n+2)\varrho_w |\partial\varrho|^2 } 
		- \frac{2a^2(2n+3) \|z\|^2}{(n+2)|\partial\varrho|^4}.
	\end{align}
	The connection forms with respect to $\widetilde{\theta}$ are given as follows (see \cite{dragomir--tomassini}, page 137, equation (2.41)):
	\begin{equation}
		\widetilde{\omega}_{\alpha}{}^{\mu}(Z_{\gba})
		=
		{\omega}_{\alpha}{}^{\mu}(Z_{\gba})
		-
		u^{\mu} h_{\alpha\gba}
		=
		\left(\xi^{\mu} - u^{\mu}\right)h_{\alpha\gba}.
	\end{equation}
	Thus, 
	\begin{align}
		\widetilde{A}_{\alpha\beta,\gba}
		& =
		Z_{\gba}\widetilde{A}_{\alpha\beta} - \widetilde{\omega}_{\alpha}{}^{\mu}(Z_{\gba}) \widetilde{A}_{\mu\beta} - \widetilde{\omega}_{\beta}{}^{\mu}(Z_{\gba}) \widetilde{A}_{\alpha\mu} \notag \\
		& = Z_{\gba}\widetilde{A}_{\alpha\beta} - \phi_{\beta}h_{\alpha\gba} - \phi_{\alpha}h_{\beta\gba}.
	\end{align}
	Here $\phi_{\beta} : = \sum_{\mu = 1}^{n} \widetilde{A}_{\mu\beta} (\xi^{\mu} - u^\mu)$. Note that $\phi_{\beta}$ is not tensorial. 
	
	Recall from \cite{gover2005sublaplace} and section 2.3 of \cite{case2017p} that the CR analogue of the Cotton tensor $V_{\alpha\bba\gamma}$ is defined by
	\begin{equation}\label{e:VTensor}
	V_{\alpha\bba\gamma}
		= 
		A_{\alpha\gamma,\bba}
		+ i P_{\alpha\bba,\gamma} - iT_\gamma h_{\alpha\bba} -2i T_{\alpha}h_{\gamma\bba},
	\end{equation}
	which satisfies \cite[Lemma~2.1]{case2017p}
	\begin{equation}\label{e:sv}
		S_{\alpha\bba\gamma\sba,}{}^{\sba} = -i n V_{\alpha\bba\gamma}.
	\end{equation} 
	Here the pseudohermitian Schouten tensor $P_{\alpha \bar \beta}$ is given by
	\begin{equation}\label{e:Schouten}
	P_{\alpha \bar \beta} = \frac{1}{n+2} \left(R_{\alpha \bar \beta} - \frac{R h_{\alpha \bar \beta}}{2 (n+1)} \right)
	\end{equation}
	and 
	\begin{equation}\label{e:TTensor}
		T_{\alpha} = \frac{1}{n+2} \left(\frac{R_{,\alpha}}{2(n+1)} -i A_{\alpha\sigma,}{}^{\sigma}\right).
	\end{equation}
These expressions are simplified on pseudo-Einstein manifolds as follows:
	\begin{lem} \label{lem:VTensor}
	Let $(M,\eta)$ be a pseudo-Einstein CR manifold of dimension $2n+1 \geq 5$. Then
		\begin{equation} 
			V_{\alpha\bba\gamma}
			=
			A_{\alpha\gamma,\bba} + \frac{i(R_{,\gamma}h_{\alpha\bba} + R_{,\alpha} h_{\gamma\bba})}{n(n+1)},
		\end{equation} 
		\begin{equation} \label{e:cmv}
			S_{\rho}{}^{\alpha\bba\gamma} V_{\alpha\bba\gamma} = S_{\rho}{}^{\alpha\bba\gamma} A_{\alpha\gamma,\bba}.
		\end{equation} 
	\end{lem}
	\begin{proof}
	If $\theta$ is pseudo-Einstein, then $R_{\alpha\bba} = (R/n) h_{\alpha\bba}$ and thus from \eqref{e:Schouten} it follows that
	\begin{equation} 
		P_{\alpha\bba,\gamma} 
		=
		\frac{1}{2n(n+1)} R_{,\gamma} h_{\alpha\bba}.
	\end{equation} 
	Using the identity $R_{,\alpha} - i(n-1)A_{\alpha\sigma,}{}^{\sigma} = R_{\alpha\bba,}{}^{\bba}$ (Eq. (2.11) in \cite{lee1988pseudo}), we find that \eqref{e:TTensor} becomes
	\begin{equation} 
		T_{\alpha}
		= -\frac{1}{2n(n+1)}R_{,\alpha}.
	\end{equation} 
	Plugging these expressions into \eqref{e:VTensor} we obtain that
	\begin{equation*} 
		V_{\alpha\bba\gamma}
		=
		A_{\alpha\gamma,\bba} + \frac{i(R_{,\gamma}h_{\alpha\bba} + R_{,\alpha} h_{\gamma\bba})}{n(n+1)}.
	\end{equation*} 
	Then, \cref{e:cmv} follows since the Chern--Moser--Weyl tensor is completely tracefree. The proof is complete.
	\end{proof}
To compute further in the proof of \cref{p:nonTrivialX}, we observe that
\begin{align} \label{e:identity1}
	\zba_{\alpha}z_{\mu} \zba_{\nu} z_{\gamma}\left(\zba_{\mu} \delta_{\gamma\nu} + \zba_{\gamma}\delta_{\mu\nu}\right) 
	= 2\|z\|^4 \zba_{\alpha},\\
	\left(h^{\gamma\bar{\nu}}\delta_{\alpha\mu} + h^{\mu\bar{\nu}}\delta_{\alpha\gamma}\right) \left(\zba_{\mu} \delta_{\gamma\nu} + \zba_{\gamma}\delta_{\mu\nu}\right)
	=
	2\zba_{\alpha} \left(n-1+\frac{2|\vr_w|^2}{|\partial\vr|^2}\right) \\
	\left(\frac{\delta_{\alpha\mu}\zba_{\nu}z_{\gamma}|\vr_w|^2}{|\partial\vr|^2} + h^{\mu\bar{\nu}}\zba_{\alpha} z_{\gamma} + \frac{\delta_{\alpha\gamma}z_{\mu}\zba_{\nu}|\vr_w|^2}{|\partial\vr|^2} + h^{\gamma\bar{\nu}}z_{\mu}\zba_{\alpha} \right)\left(\zba_{\mu} \delta_{\gamma\nu} + \zba_{\gamma}\delta_{\mu\nu}\right)\notag \\
	=
	2\left((n-1)|\partial\vr|^2 +4|\vr_w|^2\right)\frac{\|z\|^2\zba_{\alpha}}{|\partial\vr |^2}. \label{e:identity3}
\end{align}
	Since $S_{\rho}{}^{\alpha\bar{\gamma}\beta} h_{\alpha\gba} = S_{\rho}{}^{\alpha\bar{\gamma}\beta} h_{\beta\gba} = 0$, we have by \cref{lem:VTensor} and using the identities \eqref{e:identity1}--\eqref{e:identity3}, that,
	\begin{align}\label{e:SV}
		-i\widetilde{S}_{\alpha}{}^{\mu\bar{\nu}\gamma} \widetilde{V}_{\mu\bar{\nu}\gamma}
		& = 
		-ie^{-2u}S_{\alpha}{}^{\mu\bar{\nu}\gamma} \widetilde{A}_{\mu\gamma,\bar{\nu}} \notag \\
		& = 
		\frac{n(n-1)}{(n+1)(n+2)^2}\frac{a^3 \|z\|^6 \zba_{\alpha}}{|\partial\vr|^{10-4/(n+2)}\vr_w^2}\left(2Q - |\vr_w|^2 P\right) \notag \\
		& = 
		\frac{n(n-1)}{(n+1)(n+2)^2}\frac{a^4 \|z\|^6 \zba_{\alpha}}{|\partial\vr|^{14 - 4/(n+2)}}
		\left(\frac{2(2n+3) a\|z\|^2 \varrho_{\wba}}{(n+2)\vr_w} - (n+1)|\partial\varrho|^2\right).
	\end{align}
	Therefore, putting \cref{e:NablaS,e:sv,e:SV} together and setting $n=2$, we obtain:
	\begin{equation}\label{e440}
		\widetilde{X}_{\alpha}
		= 
		-i \widetilde S_{\alpha}{}^{\mu\nba\gamma} \widetilde V_{\mu\nba\gamma} + \frac{1}{4} \nabla_{\alpha} |\widetilde S|^2
		=
		\frac{1}{24}\frac{a^4\|z\|^6 \zba_{\alpha}}{|\partial\varrho|^{13}}\left(|\partial\varrho|^2 +9 a\|z\|^2\vr_{\wba}/\vr_w\right).
	\end{equation}
Thus, $\widetilde{X}_{\alpha} \not\equiv 0$ on $E(a)$, as desired. Plugging in the coframe $\widetilde{\theta}^{\alpha} = dz_{\alpha} + (u^{\alpha} - \xi^{\alpha})\partial\vr$ and simplifying the result, we obtain \cref{e:xalpha1}. The proof is complete.
\end{proof}
\begin{rem} Using \cref{e440}, we can further show that the divergence $\widetilde{\nabla}^{\alpha} \widetilde{X}_{\alpha}$ is nontrivial by direct calculations. Indeed, computing at $w\ne 0$, using
\[
	\widetilde{\nabla}^{\alpha} \widetilde{X}_{\alpha}
	=
	e^{-u}\left(Z_{\sba} (\widetilde{X}_{\alpha})h^{\sba\alpha} - 2 (\xi^{\mu} - u^{\mu}) \widetilde{X}_{\mu}\right),
\]
and letting $w \to 0$, we find that 
\[
	\widetilde{\nabla}^{\alpha} \widetilde{X}_{\alpha}\bigl|_{w = 0} = -\tfrac{1}{24}a^4(9 a^2+1) \ne 0
\]
if $a\ne 0$. Thus, as briefly sketched in \cref{sec:intro}, $E(a)$ furnishes a counterexample for the Hirachi conjecture in the nonspherical case.
\end{rem}
\section{A hypersurface with parallel Chern--Moser--Weyl tensor}

If $M^5$ is CR spherical, then both $X_{\alpha}$ and $\mathcal{I}'$ are trivial. More generally, if $M^5$ admits a contact form such that
\begin{equation}\label{e:parallelcm}
	S_{\alpha\bba\gamma\sba,\rho} = S_{\alpha\bba\gamma\sba,\bar{\rho}} = 0,
\end{equation} 
then $X_{\alpha} = 0$ and hence $\mathcal{I}'$ is CR invariant in this case. Using \cref{thm:chern-moser} and \cref{cor:torsion}, 
we give an explicit example of a nonspherical CR manifold such that the conditions in \cref{e:parallelcm} hold.
\begin{example}\label{ex:parallelchernmoser}
Consider the ellipsoidal tube $E=E(1,1,\dots,1)$ given by $\vr = 0$, where
\begin{equation*}
	\vr : = \sum_{j=1}^{n+1}|z_j|^2 + \Re \sum_{j=1}^{n+1} z_j^2 -1.
\end{equation*}
We point out that this real hypersurface has been studied in various papers, e.g., \cite{marugame2016renormalized,ebenfelt2018family}.
By direct calculations,
\begin{equation*}
	|\partial \vr|^2 = \sum_{j=1}^{n+1} |z_j + \zba_j|^2 = 2\vr +2, \quad 	h_{\alpha\beta} = h_{\alpha\bba} = \delta_{\alpha\beta} + \frac{\vr_{\alpha}\vr_{\beta}}{\vr_w^2}, \quad w: = z_{n+1}.
\end{equation*}
With $\theta: = i\bar{\partial}\vr$, the Tanaka--Webster connection forms are
\begin{equation}\label{e:cfexample}
	\omega_{\beta}{}^{\gamma}
	=
	\tfrac12 \left(\vr_{\gba} h_{\beta\mu} \theta^{\mu} + \vr_{\gba}h_{\beta\bar{\mu}} \theta^{\bar{\mu}} - i \delta_{\beta\gamma}\, \theta\right),
\end{equation}
and the pseudohermitian curvature tensor is
\begin{equation}\label{e:curexample}
	R_{\alpha\bba\gamma\sba}
	=
	- \frac{h_{\alpha\gamma}h_{\bba \sba}}{2} + \frac{h_{\alpha\bba}h_{\gamma\sba} + h_{\alpha\sba}h_{\gamma\bba}}{2}.
\end{equation}
Taking the trace, we see that $R_{\gamma\sba} = \frac{n}{2} h_{\gamma\sba}$ and $R = \frac{n^2}{2}$. Hence, $\theta$ is pseudo-Einstein. On the other hand, one can derive from \cref{e:cfexample,e:curexample}, that 
\begin{equation*} 
	R_{\alpha\bba\gamma\sba, \epsilon}
	=
	R_{\alpha\bba\gamma\sba, \bar \epsilon}
	=
	R_{\alpha\bba\gamma\sba, 0} = 0.
\end{equation*} 
In other words, $\theta$ is symmetric and, in particular, the Chern--Moser--Weyl tensor is parallel. This implies that $X_\alpha = 0$.

By removing the trace from $R_{\alpha\bba\gamma\sba}$, we obtain
\begin{equation}\label{e:cm1}
	S_{\alpha\bba\gamma\sba}
	=
	- \frac12 h_{\alpha\gamma}h_{\bba \sba} + \frac{h_{\alpha\bba}h_{\gamma\sba} + h_{\alpha\sba}h_{\gamma\bba}}{2(n+1)}
\end{equation}
Thus,
\begin{equation}\label{e:cmexample}
	|S|^2 = \frac{n(n-1)}{4(n+1)},
\end{equation}
which implies in particular that $E$ is nonumbilical if $n\geq 2$ (the case $n = 1$ was treated in \cite{ebenfelt2018family}). 
Thus, in dimension five, the invariant $\mathcal{I}^{\prime}(\theta)$ from \eqref{e:defIprime} is given by
\begin{equation*}
	\mathcal{I}^{\prime}(\theta) = \frac{1}{36}.
\end{equation*}
\end{example}
\begin{example}
Note that the real hypersurface $E$ in \cref{ex:parallelchernmoser} is noncompact. However, $E$ is locally CR equivalent to the compact Reinhardt hypersurface defined by $\Sigma_r: = \{\widetilde{\vr}_r = 0\}$, where
 \begin{equation} 
	 \widetilde{\vr}_r(z, \zba) = \sum_{j=1}^{n+1} \left(\log |z_j|\right)^2 - r^2, \quad r>0.
 \end{equation} 
 There is a unique pseudohermitian structure $\widetilde{\theta}_r$ on $\Sigma_r$ such that \cref{e:cmexample} holds on $\Sigma_r$. For this structure, the local considerations on $\Sigma_r$ and $E$ agree. Moreover, with this normalization, $\widetilde{\theta}_r$ is invariant under CR diffeomorphisms and its volume $\int_M \widetilde{\theta}_r \wedge (d\widetilde{\theta}_r)^n$ is a global invariant. Alternatively, for any $\theta$, the integral $\int_M |S_{\alpha\bba\gamma\sba}|^{n+1} \theta \wedge (d\theta)^n$ is invariant; the latter interpretation is valid for all cases (i.e., regardless whether $M$ is nowhere umbilical or not), albeit the integrand $|S_{\alpha\bba\gamma\sba}|^{n+1}$ is not polynomial in $S$ when $n+1$ is not even.
 
A seemingly more refined polynomial CR invariant that is built from $S$ is the following:
\[
	S^{n+1} 
	:=
	S_{\alpha_1}{}^{\alpha_2}{}_{\mu_1}{}^{\mu_2}S_{\alpha_2}{}^{\alpha_3}{}_{\mu_2}{}^{\mu_3}\, \cdots \, S_{\alpha_n}{}^{\alpha_{n+1}}{}_{\mu_n}{}^{\mu_{n+1}} S_{\alpha_{n+1}}{}^{\alpha_1}{}_{\mu_{n+1}}{}^{\mu_1} \quad (n+1\  \text{factors}).
\]
Clearly, the integral $\int_M S^{n+1} \theta \wedge (d\theta)^n$ is invariant.
On $\Sigma_r$, the Chern--Moser--Weyl tensor $S_{\alpha\bba\gamma\sba}$ is parallel and hence $S^{n+1}$ is constant on $\Sigma_r$. This constant can be computed explicitly from \cref{e:cm1}. Indeed, raising the indices on both sides of \cref{e:cm1} yields
\[
	S_{\alpha_1}{}^{\alpha_2}{}_{\mu_1}{}^{\mu_2}
	=
	c_1 h_{\alpha_1\mu_1} h^{\alpha_2\mu_2} + d_1 \left(\delta_{\alpha_1}^{\alpha_2} \delta_{\mu_1}^{\mu_2} + \delta_{\alpha_1}^{\mu_2} \delta_{\mu_1}^{\alpha_2} \right),
\]
with $c_1 = -1/2$ and $d_1 = 1/(2(n+1))$. For each $k\geqslant 1$,
\[
	S{[k]}:= 	S_{\alpha_1}{}^{\alpha_2}{}_{\mu_1}{}^{\mu_2}S_{\alpha_2}{}^{\alpha_3}{}_{\mu_2}{}^{\mu_3}\, \cdots \, S_{\alpha_k}{}^{\alpha_{k+1}}{}_{\mu_k}{}^{\mu_{k+1}}
\]
takes a similar form:
\[
	S{[k]}_{\alpha_1}{}^{\alpha_2}{}_{\mu_1}{}^{\mu_2}=c_k h_{\alpha_1\mu_1} h^{\alpha_2\mu_2} + d_k \left(\delta_{\alpha_1}^{\alpha_2} \delta_{\mu_1}^{\mu_2} + \delta_{\alpha_1}^{\mu_2} \delta_{\mu_1}^{\alpha_2} \right),
\]
with corresponding coefficients $c_k$ and $d_k$ satisfying the following recursion relations:
\[
	c_{k+1} = n c_1 c_k  +  2\, c_k d_1 + 2\, c_1 d_k,
	\quad 
	d_{k+1} = 2\, d_1 d_k.
\]
Solving for $d_k$ yields $d_k = \frac12 (n+1)^{-k}$. The recursion formula for the $c_k$'s becomes
\[
	c_{k+1} = - \frac{1}{2}\left(\frac{n^2 + n -2}{n+1} c_k + \frac{1}{(n+1)^k}\right).
\]
This relation can be solved by setting $y_k = (n+1)^{k-1} c_k + 1/(n^2+n)$. Omitting the detailed calculations, we present the result for $c_k$:
\[
c_k = \frac{1}{n(n+1)^k} \left[\left(\frac{2-n-n^2}{2}\right)^k -1\right].
\]
Consequently,
\begin{align}\label{e:last}
	S^{n+1}
	& = 
	S{[n]}_{\alpha_1}{}^{\alpha_2}{}_{\mu_1}{}^{\mu_2} \cdot  S_{\alpha_2}{}^{\alpha_1}{}_{\mu_2}{}^{\mu_1} \notag \\
	& = n^2 c_1 c_n + 2n c_1 d_n + 2n c_n d_1 + 4 d_1 d_n  \notag \\
	& = \left[\frac{1}{n+1} - \frac{n}{2}\right]^{n+1}.
\end{align}
Hence, $S^{n+1}$ is a real constant which is positive when $n$ is odd and negative when $n$ is even.

Finally, one can verify that $\vol(\Sigma_r, \widetilde{\theta}_r) = C r^{-n-1}$ for some constant $C$ depending on $n$. This and either \cref{e:cmexample} or \cref{e:last} imply that $\Sigma_r$'s are not globally equivalent for different values of~$r$. The last observation was proved in the cases $n=1$ and $n=2$ by Burns-Epstein \cite{burns1988global} and Marugame \cite{marugame2016renormalized}, respectively, using the same strategy. Precisely, in both cases, the authors computed the Burns--Epstein invariant for $\Sigma_r$'s which turn out to be different for different values of $r$. When $n\geq 3$, the Burns-Epstein is difficult to compute, however, as pointed out to the authors by the referee, Marugame's consideration can be generalized easily to treat the general case.
\end{example}
\section*{Acknowledgement}
The authors would like to thank the referee for suggesting the detailed argument that disproves the Hirachi conjecture and for many other useful comments.


\begin{thebibliography}{28}
	
	\bibitem{burns1988global}
	Daniel~M Burns and Charles~L Epstein.
	\newblock A global invariant for three-dimensional {CR}-manifolds.
	\newblock {\em Invent. Math.}, 92(2):333--348, 1988.
	
	\bibitem{case2017p}
	Jeffrey~S Case and A~Rod Gover.
	\newblock {The ${P}^{\prime}$-operator, the ${Q}^{\prime}$-curvature, and the
		{CR} tractor calculus}.
	\newblock {\em arXiv preprint arXiv:1709.08057}, 2017.
	
	\bibitem{case2013paneitz}
	Jeffrey~S. Case and Paul Yang.
	\newblock A {P}aneitz-type operator for {CR} pluriharmonic functions.
	\newblock {\em Bull. Inst. Math. Acad. Sin. (N.S.)}, 8(3):285--322, 2013.
	
	\bibitem{chern1974real}
	Shiing-Shen Chern and J{\"u}rgen~K Moser.
	\newblock {Real hypersurfaces in complex manifolds}.
	\newblock {\em Acta math.}, 133(1):219, 1974.
	
	\bibitem{dragomir--tomassini}
	Sorin Dragomir and Giuseppe Tomassini.
	\newblock {\em {Differential geometry and analysis on {CR} manifolds}}, 
	\newblock Progress in mathematics, v. 246, Birkh\"auser Verlag, Boston 2005.
	
	\bibitem{ebenfelt2004rigidity}
	Peter Ebenfelt, Xiaojun Huang, and Dmitri Zaitsev.
	\newblock {Rigidity of {CR}-immersions into Spheres}.
	\newblock {\em Comm. Anal. Geom.}, 12(3):631--670, 2004.
	
	\bibitem{ebenfelt2018family}
	Peter Ebenfelt, Duong~Ngoc Son, and Dmitri Zaitsev.
	\newblock {A family of compact strictly pseudoconvex hypersurfaces in
		{$\mathbb{C}^2$} without umbilical points}.
	\newblock {\em Math. Res. Lett.}, 25(1):75--84, 2018.
	
	\bibitem{farris1986intrinsic}
	Frank Farris.
	\newblock {An intrinsic construction of {F}efferman{\rq}s {CR} metric}.
	\newblock {\em Pacific J. Math.}, 123(1):33--45, 1986.
		
	\bibitem{fefferman1976}
	Charles~L. Fefferman.
	\newblock {Monge-{A}mp{\`e}re equations, the {B}ergman kernel, and geometry of
		pseudoconvex domains}.
	\newblock {\em Ann. Math. (2)}, 103(2):395--416, 1976.
	
	\bibitem{fefferman2012ambient}
	Charles Fefferman and C~Robin Graham.
	\newblock {\em {The Ambient Metric (AM-178)}}.
	\newblock Princeton University Press, 2012.
	
	\bibitem{foo2018explicit}
	Wei~Guo Foo.
	\newblock {\em {Explicit Calculations of Siu{\rq}s Effective Termination of
			Kohn{\rq}s Algorithm and the Hachtroudi-Chern-Moser Tensors in CR Geometry}}.
	\newblock PhD thesis, Universit{\'e} Paris-Saclay, 2018.
	
	\bibitem{gover2005sublaplace}
	A~Rod Gover and C~Robin Graham.
	\newblock {C{R} invariant powers of the sub-{L}aplacian}.
	\newblock {\em J. Reine Angew. Math.}, 583:1--27, 2005.
	
	\bibitem{hirachi2014q}
	Kengo Hirachi.
	\newblock {Q-prime curvature on {CR} manifolds}.
	\newblock {\em Diff. Geom. Appl.}, 33:213--245, 2014.
	
	\bibitem{hirachi2017variation}
	Kengo Hirachi, Taiji Marugame, and Yoshihiko Matsumoto.
	\newblock Variation of total {Q}-prime curvature on {CR} manifolds.
	\newblock {\em Adv. Math.}, 306:1333--1376, 2017.
	
	\bibitem{huang2017chern}
	Xiaojun Huang and Ming Xiao.
	\newblock {Chern-Moser-Weyl Tensor and Embeddings into Hyperquadrics}.
	\newblock In {\em {Harmonic Analysis, Partial Differential Equations and
			Applications}}, pages 79--95. Springer, 2017.
	
	\bibitem{huang2009monotonicity}
	Xiaojun Huang and Yuan Zhang.
	\newblock {Monotonicity for the {Chern-Moser-Weyl} curvature tensor and {CR}
		embeddings}.
	\newblock {\em Sci. China, Ser. A: Math.}, 52(12):2617, 2009.
	
	\bibitem{lee1988pseudo}
	John~M Lee.
	\newblock {Pseudo-{Einstein} Structures on {CR} Manifolds}.
	\newblock {\em Amer. J. Math.}, 110(1):157--178, 1988.
	
	\bibitem{li--lin--son}
	Song-Ying Li, Guijuan Lin, and Duong~Ngoc Son.
	\newblock {The sharp upper bounds for the first positive eigenvalue of the {Kohn--Laplacian} on compact strictly pseudoconvex hypersurfaces}.
	\newblock {\em Math. Z.}, 288(3-4):949--963, 2018.
	
	\bibitem{li--luk}
	Song-Ying Li and Hing-Sun Luk.
	\newblock {An explicit formula for the {W}ebster pseudo-{R}icci curvature on real hypersurfaces and its application for characterizing balls in $\mathbb{C}^n$}.
	\newblock {\em Comm. Anal. Geom.}, 14(4):673--701, 2006.
	
	\bibitem{li2006explicit}
	Song-Ying Li and Hing-Sun Luk.
	\newblock {An explicit formula for the Webster torsion of a pseudo-hermitian manifold and its application to torsion-free hypersurfaces}.
	\newblock {\em Sci. China, Ser. A: Math.}, 49(11):1662, 2006.
	
	\bibitem{li--son}
	Song-Ying Li and Duong~Ngoc Son.
	\newblock {The {W}ebster scalar curvature and sharp upper and lower bounds for the first positive eigenvalue of the {K}ohn-{L}aplacian on real
		hypersurfaces}.
	\newblock {\em Acta Math. Sinica, Eng. Ser.}, 34(8):1248--1258,
	2018.
	
	\bibitem{marugame2016renormalized}
	Taiji Marugame.
	\newblock Renormalized {C}hern-{G}auss-{B}onnet formula for complete
	{K\"a}hler-{E}instein metrics.
	\newblock {\em Amer. J. Math.}, 138(4):1067--1094, 2016.
	
	\bibitem{morrow2006complex}
	James~A Morrow and Kunihiko Kodaira.
	\newblock {\em {Complex manifolds}}, volume 355.
	\newblock American Mathematical Soc., 2006.
	
	\bibitem{son2019semi}
	Duong~Ngoc Son.
	\newblock {Semi-isometric {CR} immersions of {CR} manifolds into {K}{\"a}hler
		manifolds and applications}.
	\newblock {\em arXiv preprint arXiv:1901.07451}, 2019.
	
	\bibitem{tanaka1975differential}
	Noboru Tanaka.
	\newblock {\em {A differential geometric study on strongly pseudoconvex
			manifolds}}.
	\newblock Kinokuniya Book-store Co., Tokyo 1975.
	
	\bibitem{webster1978pseudo}
	Sidney~M Webster.
	\newblock {Pseudo-Hermitian structures on a real hypersurface}.
	\newblock {\em J. Diff. Geom.}, 13(1):25--41, 1978.
	
	\bibitem{webster2000holomorphic}
	Sidney~M Webster.
	\newblock {Holomorphic differential invariants for an ellipsoidal real
		hypersurface}.
	\newblock {\em Duke Math. J.}, 104(3):463--475, 2000.
	
	\bibitem{webster2002remark}
	Sidney~M Webster.
	\newblock {A remark on the {C}hern-{M}oser tensor}.
	\newblock {\em Houston J. Math}, 28(2):433--435, 2002.
	
\end{thebibliography}
\end{document}